\newcommand{\ncm}{\newcommand}
\newtheorem{theorem}{Theorem}[section]
\newtheorem{prop}[theorem]{Proposition}
\newtheorem{lemma}[theorem]{Lemma}
\newtheorem{cor}[theorem]{Corollary}
\newtheorem{lem&def}[theorem]{Lemma \& Definition}
\newtheorem{definition}[theorem]{Definition}
\newtheorem{example}[theorem]{Example}
\def\M{\mathcal{M}}
\def\C{\mathbb{C}\,}
\def\N{\mathbb{N}\,}
\ncm{\End}{\mbox{\rm End}\,}
\def\Hom{\mbox{\rm Hom}}
\def\Ind{\mbox{\rm Ind}}
\def\Res{\mbox{\rm Res}}
\def\|{\, | \,}
\def\id{\mbox{\rm id}}
\def\into{\hookrightarrow}
\def\bra{\langle}
\def\ket{\rangle}
\ncm{\rarr}[1]{\stackrel{#1}{\longrightarrow}}
\ncm{\larr}[1]{\stackrel{#1}{\longleftarrow}}
\def\-2{_{(-2)}}
\def\-1{_{(-1)}}
\def\0{_{(0)}}
\def\1{_{(1)}}
\def\2{_{(2)}}
\def\3{_{(3)}}
\def\du1{\hat 1}
\begin{document}
\title[Normality characterized by a  tower condition]{A tower condition \\ characterizing normality}
\author[L.~Kadison]{Lars Kadison} 
\address{Departamento de Matematica \\ Faculdade de Ci\^encias da Universidade do Porto \\ 
Rua Campo Alegre 687 \\ 4169-007 Porto} 
\email{lkadison@fc.up.pt} 
\thanks{}
\subjclass{12F10, 13B02, 16D20, 16H05, 16S34}  
\keywords{Frobenius extension, H-separable extension,  normal subring, induced  characters, subring depth, Hopf subalgebra}
\date{} 

\begin{abstract}
We define left relative H-separable tower of rings and continue a study of these begun by Sugano.  
  It is proven that a progenerator extension  has right depth $2$ if and only if the ring extension together with its right  endomorphism ring is a left relative H-separable tower.  In particular, this applies
to twisted or ordinary Frobenius extensions with surjective Frobenius homomorphism.  For example, normality for Hopf subalgebras of finite-dimensional Hopf algebras is also characterized in terms of this tower condition.
\end{abstract} 
\maketitle

\section{Introduction and Preliminaries}
\label{one}

Depth two is a bimodule condition on subrings that is equivalent to usual notions of normality for subgroups \cite{BDK}, Hopf subalgebras \cite{BK2} and semisimple complex subalgebra pairs \cite{BKK}; 
in addition, depth two is a condition of normality for a ring extension that makes it  a Galois extension with respect to a right bialgebroid coaction \cite{KS, LK2008}.
The right depth two condition on an algebra extension $A \supseteq B$ is that the natural $A$-$B$-bimodule $A \otimes_B A$ is isomorphic to a direct summand of a  natural
$A$-$B$-bimodule $A \oplus \cdots \oplus A$: in symbols this is  ${}_AA \otimes_B A_B \oplus * \cong {}_AA^n_B$. If  $A$ is a finite-dimensional Hopf algebra and $B$ is a Hopf subalgebra of $A$, it is shown in \cite{BK2} that $A \supseteq B$ has right (or left) depth $2$ if and only if $B$ is a normal Hopf subalgebra of $A$ (i.e., $B$ is invariant under either the left or right adjoint actions).  If $A$ is a finite-dimensional group algebra $\C G$, its module theory is determined by the character theory of $G$, and the right depth two condition on a group subalgebra $B = \C H$ in $A$ is determined by a matrix inequality condition on the induction-restriction table for the irreducible characters of $G$ and subgroup $H$; for more on this, depth greater than $2$ as well as modular representations, see \cite{BuK, BKK, BDK, D, F, FKR}. 

For the reasons just given  we call a subring $B \subseteq A$ satisfying the right depth
$2$ condition above a right normal subring (and the ring extension $A \supseteq B$
a right normal extension), the details appearing in Definition~\ref{def-normal}.  We find a characterization of normal Frobenius extensions with surjective Frobenius homomorphism (such as finite Hopf-Galois extensions with surjective trace map \cite{KN}), in terms of an old one-sided
H-separability condition on a tower of rings $A \supseteq B \supseteq C$ appearing in Sugano's \cite[Lemma 1.2]{S}.  This condition is interesting and we gather into ``Sugano's Theorem'' (Theorem~\ref{th-Sug}) the results for a tower satisfying this condition in \cite{S}, providing a different proof.  We show in Section~\ref{three} that a ring extension $A \supseteq B$
with the natural module $A_B$ a progenerator,  together with its right endomorphism ring $\End A_B$, forms a tower satisfying Sugano's condition, called ``left relative H-separable,''  if and only if $A \supseteq B$ is a right normal extension. 
In Corollary~\ref{cor-Frob} it is noted that  Theorem~\ref{th-charD2Hsep} establishes the same tower  characterization of normality for  
$\beta$-Frobenius extensions with surjective Frobenius homomorphism.  For example, an arbitrary Hopf subalgebra of a finite-dimensional Hopf algebra is such a twisted Frobenius extension: then  Corollary~\ref{cor-Hopfish} characterizes a normal Hopf subalgebra in terms of its right endomorphism algebra.   A new proof that right normality is equivalent to left normality for Frobenius extensions with the surjectivity condition is noted in Corollary~\ref{cor-leftright}.  

\subsection{H-separable extensions} A ring extension $A \supseteq B$ is H-separable if $A \otimes_B A \oplus * \cong A^n$ as natural $A$-bimodules \cite{Hir}. The notion of H-separability extends certain nice results for Azumaya algebras to ring extensions.  For example, the Azumaya isomorphism of the enveloping algebra and the endomorphism algebra is extended for an H-separable ring extension $A \supseteq B$ to a bimodule isomorphism, $A \otimes_B A \cong \Hom (R_Z, A_Z)$ where $R$ is the centralizer of $B$ in $A$ and $Z$ is the center of $A$ \cite{H}.  One also shows that $A \supseteq B$ is a separable extension, and if this is additionally split, that
$R$ is a separable algebra over $Z$ \cite{H,S, NEFE}.  Any bimodule $M$ over $A$ has a generalized Azumaya isomorphism $M^A \otimes_Z R \cong M^B$ between the $A$- and $B$-centralized elements of $M$.  

H-separable extension theory was one of the motivational models for  \cite{KS} which extends to ring  theory the notion of  depth $2$ for free Frobenius extensions in \cite{KN} (see \cite[Examples 3.6, 4.8, 5.8]{KS}, another toy model being Lu's Hopf algebroids on an algebra).  Examples of H-separable extension come from tensoring Azumaya algebras with other algebras, or looking at certain subalgebras within Azumaya algebras; certainly group algebra and Hopf algebra extensions are trivial if H-separable, which is true generally \cite{LK2013} but easier to prove with characters (see Proposition~\ref{prop-char} in this paper). 
The more general notion of depth two ring extension welcomes  examples  from 
 Hopf-Galois extensions including normal Hopf subalgebras; indeed, depth two ring extension is equivalent, with one other condition (balanced module), to a Galois extension with bialgebroid coactions \cite{KS, LK2008}, where bialgebroid is the good generalization of bialgebra from algebras to ring extensions.

 An H-separable extension $A \supseteq B$  is the H-depth $n = 1$ case of odd minimal H-depth  $d_H(B,A) = 2n-1$ where $A^{\otimes_B n} \sim A^{\otimes_B (n+1)}$ as $A$-bimodules (\cite{LK2011}, see below in this section for H-equivalent modules).    We show in Propositon~\ref{prop-equalityofHdepth} that a relative separable and H-separable tower $A \supseteq B \supseteq C$ has equality of minimal H-depth, $d_H(B,A) = d_H(C,A)$.  We note a different proof of Sugano's theorem~\ref{th-Sug} (cf. \cite{S}) that shows that in such a relative H-separable tower  there is a close relation between depth $1$
(centrally projective)
and H-depth $1$ (H-separable) extensions $B \supseteq C$ and $A^C \supseteq A^B$, as well as split and separable extensions.  

The following unpublished characterization of H-separable extensions is useful below.  

\begin{prop}
\label{folklore?}
Let $A \| B$ be a ring extension.  Then $A \| B$ is H-separable if and only if 
for each $A$-module $N$, its restriction and induction satisfies $\Ind^A_B \Res^A_B N \oplus * \cong N^m$ for some
$m \in \N$ via two natural transformations.  Consequently, if
$A \| B$ is H-separable and $A$-modules $V,W$ satisfy $V_B \oplus * \cong W_B$, then $V_A \oplus * \cong W_A^m$ for some $m \in \N$.  
\end{prop}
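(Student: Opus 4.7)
The plan is to prove the biconditional first and then derive the ``consequently'' clause from the forward direction together with the classical fact, recorded in the preceding paragraph, that every H-separable extension is separable.

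For the forward direction of the equivalence, I would start from the $A$-bimodule splitting $\iota \colon A \otimes_B A \hookrightarrow A^m$ and its retract $\pi \colon A^m \to A \otimes_B A$ provided by H-separability. Tensoring $\iota$ and $\pi$ on the right over $A$ with an arbitrary $A$-module $N$, and using the canonical left $A$-module isomorphisms $(A \otimes_B A) \otimes_A N \cong A \otimes_B N$ and $A^m \otimes_A N \cong N^m$, yields the desired $A$-linear maps $A \otimes_B N \to N^m$ and $N^m \to A \otimes_B N$ whose composition is $\id_{A \otimes_B N}$; functoriality of $-\otimes_A N$ in $N$ supplies the naturality.

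For the converse, I would evaluate the given natural transformations at $N = A$, obtaining left $A$-linear maps $\alpha \colon A \otimes_B A \to A^m$ and $\beta \colon A^m \to A \otimes_B A$ with $\beta \alpha = \id$. The key step is to upgrade these to $A$-bimodule maps: for each $a \in A$, right multiplication $\rho_a \colon A \to A$ is left $A$-linear, so naturality against $\rho_a$ forces $\alpha$ and $\beta$ to commute with right multiplication by $a$. This supplies the missing right $A$-linearity and produces the $A$-bimodule splitting characteristic of H-separability.

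For the ``consequently'' statement, given a $B$-linear splitting $p \circ i = \id_V$ with $i \colon V \to W$ and $p \colon W \to V$, induction $A \otimes_B -$ carries it to an $A$-linear splitting showing that $A \otimes_B V$ is an $A$-direct summand of $A \otimes_B W$; by the first part applied to $W$, the latter is an $A$-summand of $W^m$, hence so is $A \otimes_B V$. To upgrade from $A \otimes_B V$ to $V$, I would invoke H-separable $\Rightarrow$ separable and fix a separability element $e \in (A \otimes_B A)^A$ with $\mu(e) = 1$, where $\mu \colon A \otimes_B A \to A$ denotes multiplication: the $A$-linear map $V \to A \otimes_B V$, $v \mapsto e \cdot v$, then splits $\mu_V \colon A \otimes_B V \to V$, exhibiting $V$ as an $A$-summand of $A \otimes_B V$ and therefore of $W^m$.

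The main obstacle is precisely this last upgrade. The natural unit $v \mapsto 1 \otimes v$ is only $B$-linear in general, so realising $V$ as an $A$-direct summand of $A \otimes_B V$ genuinely requires the separability of $A \supseteq B$ and not merely the bimodule condition of H-separability. The promotion from left $A$-linearity to bimodule structure in the converse of the first part is the other delicate step, but naturality against the family $\{\rho_a\}_{a \in A}$ of right multiplications dispatches it cleanly.
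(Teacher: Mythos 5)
Your proposal is correct and follows essentially the same route as the paper: tensor the bimodule splitting by the module to get the forward direction, evaluate the natural transformations at $N=A$ and use naturality against (right) multiplications to upgrade them to bimodule maps for the converse, and invoke separability of an H-separable extension to split $A\otimes_B V \to V$ for the final clause. The only cosmetic differences are your left-module convention (the paper uses right modules) and that the paper phrases both directions via explicit Casimir-type elements $e_i \in (A\otimes_B A)^A$, $r_i \in A^B$ with $1\otimes 1 = \sum_i r_i e_i$ rather than abstract splittings.
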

\begin{proof}
The second statement follows from the fact that $A \| B$ is also separable, so that
$V \otimes_B A \rightarrow V$, $v \otimes a \mapsto va$ is a natural split epi.
Note that $V \otimes_B A \oplus * \cong W \otimes_B A$, so that
the second statement follows from the characterization in the first statement.  

($\Rightarrow$)  Since $A \otimes_B A \oplus * \cong A^n$ as $A$-bimodules, the
implication follows from tensoring this by $N \otimes_A -$.  Naturality follows from
looking more carefully at the mappings, starting with a module homomorphism
$g: N_A \rightarrow {N'}_A$.  Another characterization of H-separability is that
there are elements $e_i \in (A \otimes_B A)^A$ and $r_i \in A^B$ ($i = 1,\ldots,n$) such that $1 \otimes_B 1 = \sum_i r_i e_i$.  For each module $M_A$, define natural transformations
$\tau_M: M \otimes_B A \rightarrow M_A^n$ by $\tau_M(m \otimes_B a) = 
(mr_1 a,\ldots,mr_n a)$, and $\sigma_M: {M_A}^n \rightarrow M \otimes_B A$
by $\sigma_M(m_1,\ldots,m_n) = \sum_i m_i e_i$; note that $\sigma_M \tau_M = \id_{M\otimes_B A}$  and the naturality commutative square follows readily.  

($\Leftarrow$) Let $N = A$ in the hypothesis using natural transformations (as above) $\sigma_N$ and $\tau_N$.  Then there are $A$-bimodule homomorphisms
(from naturality) $\tau_A: A \otimes_B A \rightarrow A^m$ and $\sigma_A: A^m \rightarrow A \otimes_B A$ such that $\sigma_A(\tau_A(1 \otimes_B 1))) = 1 \otimes_B 1$.  Then $\tau_A(1 \otimes_B 1) = (r_1,\ldots,r_m) \in (A^B)^m$
and, denoting the canonical basis of $A^m$ by $\{ c_1,\ldots,c_m \}$,
$\sigma_A(c_i) = e_i \in (A \otimes_B A)^A$.  Since $\tau_A$ is a section of $\sigma_A$, the equation $1 \otimes_B 1 = \sum_{i=1}^m r_ie_i$ follows; thus $A \| B$ is H-separable.    
\end{proof}
\subsection{Left relative separable ring towers} In this paper a \textit{tower of rings} $A \supseteq B \supseteq C$ is a unital associative ring $A$ with subring $B$, and $C$ a subring of $B$, so that $1_C = 1_B = 1_A$, which is denoted by $1$.  Sugano in \cite{S} defines $B$ to be a \textit{left relative separable extension of $C$ in $A$} (or briefly,
a left relative separable tower) 
if the $B$-$A$-epimorphism $\mu: B \otimes_C A \rightarrow A$, defined by
$\mu(b \otimes a) = ba$, is split; equivalently, there is a $B$-central element
$e \in (B \otimes_C A)^B$ such that $e^1 e^2 = 1$ (where $e = e^1 \otimes e^2$ is modified Sweedler notation suppressing a  finite sum of simple tensors).
Similarly one defines $B$ to be a right relative separable extension of $C$ in $A$
by requiring $\mu: A \otimes_C B \rightarrow A$ to be a split $A$-$B$-epimorphism. 
The next lemma notes that a separable extension $B \supseteq C$ always give rise to a left and right relative separable extension of $C$ in any over-ring $A$. 

\begin{lemma}
\label{lemma-relsep}
Let $A \supseteq B \supseteq C$ be a tower of rings.  If $B \supseteq C$ is a separable extension, then $B$ is a left and right relative separable extension of $C$ in $A$.  Conversely, if $A \supseteq B$ is a split extension and $B$ is a left or right relative separable extension of $C$ in $A$, then $B \supseteq C$ is a separable extension. 
\end{lemma}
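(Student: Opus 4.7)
The plan is to prove both implications by transporting a separability element along canonical $B$-$B$-bimodule maps between $B \otimes_C B$, $B \otimes_C A$, and $A \otimes_C B$. The only subtlety is checking well-definedness and bilinearity of these maps; once that is in place, both directions are short calculations.

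For the forward implication, suppose $e = e^1 \otimes_C e^2 \in (B \otimes_C B)^B$ satisfies $e^1 e^2 = 1$. The inclusion $B \hookrightarrow A$ induces a $B$-$B$-bimodule map $\iota_L: B \otimes_C B \to B \otimes_C A$, which automatically sends the $B$-central part into the $B$-central part; hence $\iota_L(e) \in (B \otimes_C A)^B$, and its multiplication $e^1 e^2 = 1$ remains valid inside $A$. This witnesses that $B$ is left relative separable over $C$ in $A$. The right case is entirely symmetric, using $\iota_R: B \otimes_C B \to A \otimes_C B$.

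For the converse, assume $\rho: A \to B$ is a $B$-$B$-bimodule retraction with $\rho(1) = 1$ (i.e., $A \supseteq B$ is split). Given a left relative separability element $e \in (B \otimes_C A)^B$ with $e^1 e^2 = 1$, apply $\id_B \otimes_C \rho: B \otimes_C A \to B \otimes_C B$. This map is well-defined because $\rho$ is $C$-bilinear (since $C \subseteq B$ and $\rho$ is $B$-bilinear) and is itself $B$-$B$-bilinear, so it preserves the $B$-central subspace. Therefore $e' := e^1 \otimes_C \rho(e^2)$ lies in $(B \otimes_C B)^B$, and using left $B$-linearity of $\rho$ one computes
\[
e'^{1} e'^{2} \;=\; e^1 \rho(e^2) \;=\; \rho(e^1 e^2) \;=\; \rho(1) \;=\; 1,
\]
so $e'$ is a separability element for $B \supseteq C$. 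If instead one starts from a right relative separability element $e \in (A \otimes_C B)^B$, replace $\id_B \otimes_C \rho$ by $\rho \otimes_C \id_B$ and argue identically.

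There is no real obstacle here; the one point demanding attention is that $\rho$ being $B$-$B$-bilinear automatically forces it to be $C$-$C$-bilinear, so that the tensor product over $C$ is respected and the pushforward sends $B$-central elements to $B$-central elements while commuting with the multiplication maps $\mu_B$ and $\mu_A$. With this observation both implications reduce to one-line manipulations of Sweedler-style expressions.
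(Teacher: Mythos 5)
Your proposal is correct and follows essentially the same route as the paper: the forward direction views the separability element of $B\otimes_C B$ inside $(B\otimes_C A)^B$ (resp.\ $(A\otimes_C B)^B$) to witness the splitting, and the converse pushes the relative separability element through the bimodule projection $A\to B$ to produce $e^1\otimes_C E(e^2)$, exactly as in the paper. No gaps.
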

\begin{proof}
Let $e \in (B \otimes_C B)^B$ satisfy $\mu(e) = e^1 e^2 = 1$, the separability condition on $B \supseteq C$. Then $e \in (B \otimes_C A)^B \cap (A \otimes_C B)^B$ defines mappings $a \mapsto ae$ and $a \mapsto ea$ splitting 
$\mu_r: A \otimes_C B \rightarrow A$ and $\mu_{\ell}: B \otimes_C A \rightarrow A$, respectively.   

Suppose $E: {}_BA_B \rightarrow {}_BB_B$ is a bimodule projection (equivalently, 
$E(1) = 1$), and $\mu: B \otimes_C A \rightarrow A$ is a split $B$-$A$-epimorphism (by $\sigma: A \rightarrow B \otimes_C A$).  Then $e = \sigma(1)$
is in $(B \otimes_C A)^B$ satisfying $e^1 e^2 = 1$.  Note then that $e^1 \otimes_C E(e^2)$ is a separability element for $B \supseteq C$.  A similar
argument for a right relative separable tower shows that $B \supseteq C$ is separable. 
\end{proof}

For example, suppose $G > H > J$ is a tower of groups (i.e., $H$ and $J$ are subgroups of $G$ where $J \subseteq H$).  Since group algebra extensions are always split,  the lemma implies that a tower of group algebras $KG \supseteq KH \supseteq KJ$ over a commutative ring $K$ is left or right relative separable if and only if $KH \supseteq KJ$ is separable if and only if $|H : J|1$ is an invertible element in $K$.   

The lemma below provides nontrivial examples of relative separable towers; its proof is easy and therefore omitted.  

\begin{lemma}
Suppose $A \supseteq C$ is a separable extension with separability element
$e \in B \otimes_C A$ for some intermediate subring $B$ of $A$ containing $C$.
Then $B$ is a left relative separable extension of $C$ in $A$.
\end{lemma}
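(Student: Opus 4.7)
The plan is to take the given separability element and use it directly, with no modification, as a witness for left relative separability of the tower $A \supseteq B \supseteq C$. Writing $e = \sum_i b_i \otimes_C a_i \in B \otimes_C A$ for the hypothesized separability element, the two conditions to verify for left relative separability are $e \in (B \otimes_C A)^B$ and $\mu(e) = e^1 e^2 = 1$.

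The multiplicative condition $\mu(e) = \sum_i b_i a_i = 1$ is exactly the separability condition for $A \supseteq C$, and this value is intrinsic to the tensor written out, independent of whether one views $e$ in $B \otimes_C A$ or in $A \otimes_C A$, since the multiplication map on $B \otimes_C A$ factors through $\mu$. The $B$-centrality is immediate from the $A$-centrality assumption: since $B \subseteq A$, the identity $ae = ea$ for $a \in A$ specializes to $be = eb$ for $b \in B$. Concretely, the left action $b \cdot (b_i \otimes_C a_i) = bb_i \otimes_C a_i$ lands in $B \otimes_C A$ (because $bb_i \in B$) and the right action $(b_i \otimes_C a_i) \cdot b = b_i \otimes_C a_i b$ lands there as well, so the centrality relations inherited from $A$-centrality hold at the level of $B \otimes_C A$.

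The one mild bookkeeping point is that $A$-centrality is a priori a condition in $A \otimes_C A$ while left relative separability requires $B$-centrality in $B \otimes_C A$, and the natural map $B \otimes_C A \to A \otimes_C A$ need not be injective; however, the hypothesis furnishes $e$ as a specific element of $B \otimes_C A$, and the $B$-centrality relations just described are computed internally to $B \otimes_C A$ using only the $B$-bimodule structure, so no injectivity is needed. This confirms why the author declared the proof ``easy and therefore omitted'': there is no real obstacle, and the same element $e$ plays both roles.
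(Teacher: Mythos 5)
The paper declares this proof ``easy and therefore omitted,'' so there is no written argument to compare against; your strategy --- use the given element $e$ itself, unchanged, as the witness for left relative separability --- is certainly the intended one. Your verification of the multiplicative condition is correct: writing $\iota\colon B \otimes_C A \to A \otimes_C A$ for the natural map, the multiplication $\mu_{\ell}\colon B\otimes_C A \to A$ is the composite $\mu \circ \iota$, so $e^1e^2 = 1$ for $e$ viewed in $B \otimes_C A$ is exactly the separability condition and requires no injectivity.

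The problem is your handling of the one point you yourself flag. The $A$-centrality of a separability element is the relation $ae = ea$ \emph{in} $A \otimes_C A$. For $b \in B$, the two elements $be = \sum_i bb_i \otimes_C a_i$ and $eb = \sum_i b_i \otimes_C a_i b$ are indeed both honest elements of $B \otimes_C A$, but the only thing the hypothesis gives you about them is that their images under $\iota$ coincide; two elements of $B \otimes_C A$ with equal images in $A \otimes_C A$ need not be equal. There is no ``internal'' computation in $B \otimes_C A$ producing $be = eb$ --- your own word ``inherited'' gives the game away, since inheritance along $\iota$ is precisely where injectivity is used. So the assertion $e \in (B \otimes_C A)^B$, which is the entire content of the lemma beyond the trivial multiplicative check, is unjustified as written. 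The gap closes whenever $\iota$ is injective (for instance if $A$ is flat as a left $C$-module, or $B$ is a left $C$-module direct summand of $A$, as in the paper's Example~\ref{example-matrixalgebra}, where $C = K$ is the ground ring and all modules are free), or if one reads the hypothesis as asserting that $e$ is $B$-central already as an element of $B \otimes_C A$. Either say explicitly which reading you adopt, or supply the injectivity; the claim that ``no injectivity is needed'' is not an argument and, as stated, is false.
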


\begin{example}
\label{example-matrixalgebra}
\begin{rm}
Let $K$ be a commutative ring and $A = M_n(K)$ the full $K$-algebra of $n \times n$ matrices.  Let $e_{ij}$ denote the matrix units ($i,j = 1,\ldots, n$). Any of the
$n$ elements $e_j = \sum_{i=1}^n e_{ij} \otimes_K e_{ji}$ are separability idempotents for $A$. 
This is also an example of a (symmetric) Frobenius algebra with trace map $T: A \rightarrow K$ having dual bases $e_{ij}, e_{ji}$.  

Let $B_1, B_2$ denote the upper and lower triangular matrix algebras respectively (both of rank $n(n+1)/2$).  Then $B_1$ and $B_2$ are right, respectively left, relative separable algebras in $M_n(K)$, since $e_1 \in $ \newline  $A \otimes_K B_1 \cap 
B_2 \otimes_K A$.  
\end{rm}
\end{example}

\subsection{Left relative H-separable ring towers} In the same paper \cite{S}, Sugano considers a related condition on a tower of rings $A\supseteq B \supseteq C$, given by the condition on $B$-$A$-bimodules,
\begin{equation}
\label{eq: rel-H-sep}
{}_B B \otimes_C A_A \oplus * \cong {}_BA^n_A,
\end{equation}
for some $n \in \N$, 
i.e., $B \otimes_C A$ is isomorphic to a direct summand of $A \oplus \cdots \oplus A$ as natural $B$-$A$-bimodules.   We define  $B$ to be a \textit{left
relative H-separable extension of $C$ in $A$} (or briefly refer to a \textit{left relative H-separable tower}) if it satisfies the condition in (\ref{eq: rel-H-sep}).  Note that if $A = B$ the condition in (\ref{eq: rel-H-sep}) is that of H-separability of $B \supseteq C$; if $B = C$,  the condition becomes trivially satisfied
by any ring extension $A \supseteq B$. 
We note a lemma similar to the one above.

\begin{lemma}
\label{lemma-hsep}
Let $A \supseteq B \supseteq C$ be a tower of rings.  If $B \supseteq C$ is an H-separable extension, then $B$ is a left (and right) relative H-separable extension of $C$ in $A$. 
\end{lemma}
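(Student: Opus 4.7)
The plan is to derive the relative H-separability of the tower directly from the H-separability of $B \supseteq C$ by applying the exact functor $- \otimes_B A$ to the defining bimodule direct summand relation.

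Starting from the hypothesis that $B \supseteq C$ is H-separable, I would write the H-separability condition as an isomorphism of $B$-bimodules
\[
{}_B B \otimes_C B_B \oplus * \cong {}_B B^n_B
\]
for some $n \in \N$. Applying the functor $- \otimes_B A$ to both sides preserves direct sums and direct summands, and it sends the left $B$-module structure to itself while converting the right $B$-module structure to a right $A$-module structure. Thus I obtain a $B$-$A$-bimodule relation
\[
{}_B (B \otimes_C B) \otimes_B A_A \oplus * \cong {}_B B^n \otimes_B A_A.
\]
The right-hand side is naturally isomorphic to ${}_B A^n_A$ via the canonical identification $B \otimes_B A \cong A$, and the associativity isomorphism $(B \otimes_C B) \otimes_B A \cong B \otimes_C (B \otimes_B A) \cong B \otimes_C A$ gives $B \otimes_C A$ on the left-hand side. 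This yields precisely the relation \eqref{eq: rel-H-sep} establishing that $B$ is a left relative H-separable extension of $C$ in $A$.

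For the right relative H-separability, I would symmetrically apply the functor $A \otimes_B -$ to the same H-separability isomorphism, using $A \otimes_B B \cong A$ and the associativity $A \otimes_B (B \otimes_C B) \cong A \otimes_C B$ to obtain ${}_A A \otimes_C B_B \oplus * \cong {}_A A^n_B$.

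I do not expect any real obstacle: the argument is essentially a formal manipulation with tensor products and direct summands, and the only things to check are the standard natural isomorphisms $B \otimes_B A \cong A$ and the associativity of tensor product across different subrings, both of which respect the relevant bimodule structures. This is why the author has omitted the proof; the same style of argument (tensoring an H-separability relation by a module) appears in the proof of Proposition~\ref{folklore?} earlier in the excerpt.
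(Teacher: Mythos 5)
Your proof is correct and follows essentially the same route as the paper: the author likewise tensors the H-separability relation ${}_BB\otimes_C B_B \oplus * \cong {}_BB^n_B$ by the additive functor $-\otimes_B A_A$ (respectively $A\otimes_B -$) and cancels $B\otimes_B A\cong A$. The only cosmetic point is that you should invoke additivity rather than exactness of $-\otimes_B A$, since preserving direct summands is all that is used (and the functor need not be exact).
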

\begin{proof}
Given the H-separability condition on the natural $B$-$B$-bimodules, $B \otimes_C B \oplus * \cong B^m$, we tensor this from the right by the additive functor
$- \otimes_B A_A$.  After a cancellation of the type $B \otimes_B A \cong A$, the
condition in (\ref{eq: rel-H-sep}) results.  Tensoring similarly by  additive functor $A \otimes_B -$ from the category of $B$-$B$-bimodules into the category of $A$-$B$-bimodules results in the obvious right relative H-separable extension condition (${}_AA \otimes_C B_B \oplus * \cong {}_AA_B^n$) on $B$ over $C$ in $A$.  
\end{proof}
Note that if $B$ is an H-separable extension of $C$, then $B$ is left and right
relative separable and relative H-separable extension of $C$ in any over-ring $A$,
since H-separable extensions are separable extensions \cite{NEFE} (and applying both lemmas).

\subsection{Preliminaries on subring normality and depth} Let $A$ be a unital associative ring.  The category of  right modules over $A$ will be denoted
by $\M_A$.  
Two modules $M_A$ and $N_A$ are \textit{H-equivalent} (or similar) if $M \oplus * \cong  N^q $ and $N \oplus * \cong  M^r$
for some $r,q \in \N$ (sometimes briefly denoted by  $M \sim N$). It is well-known that H-equivalent modules have Morita equivalent endomorphism rings.  

 Let $B$ be a subring of $A$ (always supposing $1_B = 1_A$).  Consider the natural bimodules ${}_AA_A$, ${}_BA_A$, ${}_AA_B$ and
${}_BA_B$ where the last is a restriction of the preceding, and so forth.  Denote the tensor powers
of ${}_BA_B$ by $A^{\otimes_B n} = A \otimes_B \cdots \otimes_B A$ for $n = 1,2,\ldots$, which is also a natural bimodule over  $B$ and $A$ in any one of four ways;     set $A^{\otimes_B 0} = B$ which is only a natural $B$-$B$-bimodule.  

\begin{definition}
\label{def-depth}
If $A^{\otimes_B (n+1)}$ is H-equivalent to $A^{\otimes_B n}$ as $X$-$Y$-bimodules,  one says $B \subseteq A$ (or $A \supseteq B$) has  
\begin{itemize}
\item depth $2n+1$ if $X = B = Y$;
\item left depth $2n$ if $X = B$ and $Y = A$;
\item right depth $2n$ if $X = A$ and $Y = B$;
\item H-depth $2n-1$ if $X = A = Y$.
\end{itemize}
(Valid for even depth and H-depth if $n \geq 1$ and for odd depth if $n \geq 0$.) 
Note that $B \subseteq A$ having depth $n$ implies it has depth $n +1$. Similarly
if $B \subseteq A$ has H-depth $2n-1$, then it has H-depth $2n+1$
(and depth $2n$).  
Define
minimum depth $d(B,A)$, and minimum H-depth $d_H(B,A)$ to be the least depth, or H-depth, satisfied by $B \subseteq A$; if $B \subseteq A$ does not have finite depth,
equivalently finite H-depth, set $d(B,A) = d_H(B,A) = \infty$. 
\end{definition}

 For example, $B \subseteq A$ has depth $1$ iff ${}_BA_B$ and ${}_BB_B$ are H-equivalent \cite{BK2}.
Equivalently, ${}_BA_B \oplus * \cong {}_BB_B^n$ for some $n \in \N$ \cite{LK2012}.  This in turn is
equivalent to there being $f_i \in \Hom ({}_BA_B, {}_BB_B)$ and $r_i \in A^B$ such that
$\id_A = \sum_i f_i(-) r_i$, the classical central projectivity condition \cite{M}.  
  In this case, it is easy to show that $A$ is ring isomorphic to $B \otimes_{Z(B)} A^B$ where
$Z(B), A^B$ denote the center of $B$ and centralizer of $B$ in $A$. From this we deduce immediately 
that a centrally projective ring extension $A \supseteq B$ (equivalently, depth $1$ extension) has centers
satisfying  $Z(B) \subseteq Z(A)$, a condition of Burciu that characterizes depth~$1$ for
a semisimple complex subalgebra pair $B \subseteq A$.  Depth $1$ subgroups are normal
with one other condition on centralizers that depends on the commutative ground ring \cite{BK}.  

For another and important example of depth, the subring $B \subset A$ has right depth $2$ iff ${}_AA_B$
and ${}_A A \otimes_B A_B$ are similar; equivalently, 
\begin{equation}
\label{eq: rD2}
{}_AA \otimes_B A_B \oplus * \cong {}_AA_B^n
\end{equation}
 for some $n \in \N$.  If $A = K G$ is a group algebra of a  finite group $G$, over
a commutative ring $K$, and $B = K H$ is the group algebra of a subgroup $H< G$, then $B \subseteq A$ has right depth $2$ iff $H$ is a normal subgroup of $G$ iff $B \subseteq A$ has left depth $2$ \cite{BDK}; a similar statement is true for a Hopf subalgebra $R \subseteq H$ of finite index and over any field \cite{BDK}.  For this and further reasons mentioned in the first paragraphs of this section we propose the following terminology that is consistent with the literature on normality of subobjects \cite{BDK, BK2, BKK} and of  Galois extensions \cite{KS, LK2008}.  

\begin{definition}
\label{def-normal}
Suppose that $B \subseteq A$ is a subring pair.  We say that $B$ is a right (or left) {\em normal subring} of $A$ if $B \subseteq A$ satisfies the right (or left) depth $2$ condition above.  Similarly, if $B \rightarrow A$ is a ring homomorphism,  we say
that the ring extension $A \| B$ is a right (or left) normal extension if the 
bimodules induced by $B \rightarrow A$ satisfy the right (or left) depth $2$ condition.  A normal extension or normal subring is both left and right normal.  
\end{definition}

For example, centrally projective, or depth $1$, ring extensions are normal extensions.  
As a corollary of \cite[Theorem 3.2]{LK2012} we know that a QF extension is left normal if and only if it is right normal (extending the equivalence of left and right normality for Frobenius extensions in \cite{KS}).  The Galois theory of a normal
extension $A \supseteq B$ with the additional condition that $A_B$ is a balanced module
(with respect to its endomorphism ring $\End A_B$) is briefly summarized as follows:  the
ring $T := (A \otimes_B A)^B \cong \End {}_AA \otimes_B A_A$ has right bialgebroid
structure (and left projective) over the centralizer subring $A^B := R$ \cite{KS} with coaction on $A$, denoted
by $a \mapsto a\0 \otimes_R a\1 \in A \otimes_R T$, having
coinvariant subring $B$, such that \begin{equation}
\label{eq: Galois}
A \otimes_B A \stackrel{\cong}{\longrightarrow} A \otimes_R T
\end{equation}
given by the canonical Galois mapping $a \otimes_B c \mapsto ac\0 \otimes_R c\1$
with inverse given by $a \otimes_R t \mapsto at^1 \otimes_B t^2$ \cite{LK2008}.

\subsection{Sugano's theorem}  Compiling  results in \cite{S} into  a  theorem and using the terminology of depth, we provide a different proof (except in (7) below). 
Let $A \supseteq B \supseteq C$ be a tower of rings, and consider the centralizers
$D := A^C \supseteq A^B := R$.

\begin{theorem} 
\label{th-Sug} 
Suppose $B$ is a left relative H-separable extension of $C$ in $A$; i.e., ${}_BB \otimes_C A_A \oplus * \cong {}_BA_A^n$.  Then the following hold:
\begin{enumerate}
\item $D$ is a left finitely generated projective module over its subring $R$;
\item  as natural $B$-$A$-bimodules, $B \otimes_C A \cong \Hom ({}_RD, {}_RA)$ via \newline $b \otimes a \longmapsto (d \mapsto bda)$;
\item if $B$ is a split extension of $C$, then $D$ is a separable extension of $R$;
\item if $d(B,C) = 1$, then $d_H(R,D) = 1$;
\item if $B \supseteq C$ is a separable extension, then $D \supseteq R$ is a split extension;
\item if  $d_H(C,B) = 1$, then $d(R,D) = 1$;
\item if $A^R = B$ (i.e. $B$ has the double centralizer property in $A$) and $B_C$
is a finitely generated projective module, then the isomorphism given in (2)
restricts to $B \otimes_C B \cong \Hom ({}_RD_R, {}_RA_R)$ and (a)
$d_H(C,B) = 1$ iff $d(R,D) =1$; (b) $B \supseteq C$ is separable iff $D \supseteq R$ is split.
\end{enumerate}
\end{theorem}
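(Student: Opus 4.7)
The plan is to extract explicit ``relative H-separability elements'' from the bimodule similarity, deduce (1) and (2) by Hom-functor duality with those elements, and derive (3)--(7) by transferring the extra hypotheses through the isomorphism of~(2). Concretely, a $B$-$A$-bilinear section $\sigma\colon {}_B B \otimes_C A_A \to {}_B A^n_A$ of a split epi $\tau\colon A^n \to B \otimes_C A$ is determined by $\sigma(1 \otimes 1) = (r_1,\ldots,r_n)$, and the $C$-balance of the source forces each $r_i \in A^C = D$; dually $\tau$ is determined by $e_i := \tau(c_i)$, which lie in $(B \otimes_C A)^B$ because left and right $A$-actions on the canonical basis $\{c_i\}$ coincide. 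From $\tau\sigma = \id$ one obtains $\sum_i e_i r_i = 1 \otimes 1$. The crucial auxiliary observation is that for any $d \in D$ the ``multiply through $d$'' map $\mu_d\colon B\otimes_C A \to A$, $x\otimes y \mapsto xdy$, is well-defined and $B$-$A$-bilinear (using $d\in A^C$), so $\mu_d(e_i) = e_i^1 d e_i^2 \in A^B = R$ for every $i$, and $\mu_d(1\otimes 1)=d$ gives the identity $\sum_i e_i^1 d e_i^2 r_i = d$.

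For~(1), apply $\Hom_{B\text{-}A}(-,{}_BA_A)$ to the similarity. The identifications $\Hom_{B\text{-}A}(B\otimes_C A, A) \cong D$ (via $\phi\mapsto\phi(1\otimes 1)$, landing in $A^C$ by $C$-balance) and $\Hom_{B\text{-}A}(A,A)\cong R$ (via $\phi\mapsto\phi(1)$), both as left $R$-modules with $R$-action by left multiplication, yield ${}_R D \oplus * \cong {}_R R^n$. For~(2), the stated map $\phi$ is well-defined by $d\in A^C$ and $B$-$A$-bilinear; its inverse is $\psi(f) := \sum_i e_i^1 \otimes e_i^2 f(r_i)$. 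The identity $\psi\phi = \id$ reduces to $\sum_i e_i r_i = 1\otimes 1$ after sliding $b\in B$ across the tensor via $B$-centrality of $e_i$; the identity $\phi\psi = \id$ uses left-$R$-linearity of $f$ together with $e_i^1 d e_i^2\in R$ to rewrite $(\phi\psi f)(d) = \sum_i (e_i^1 d e_i^2) f(r_i) = f\bigl(\sum_i e_i^1 d e_i^2 r_i\bigr) = f(d)$.

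Each of (3)--(6) transfers a supplementary datum on $B\supseteq C$ to $D\supseteq R$ via the auxiliary observation. For~(5), a separability element $e_B\in(B\otimes_C B)^B$ with $e_B^1 e_B^2 = 1$ defines $E\colon D\to R$ by $d\mapsto e_B^1 d e_B^2$, which is $R$-bimodular (since $R$ commutes with $B\ni e_B^1,e_B^2$) and satisfies $E(1)=1$. For~(3), a $C$-bimodule projection $E_B\colon B\to C$ produces $\bar e_i := E_B(e_i^1)e_i^2 \in D$ (the $D$-membership follows from $ce_i=e_ic$ in $B\otimes_C A$ for $c\in C$ and the $C$-bimodularity of $E_B$), with $\sum_i \bar e_i r_i = 1$ by applying $E_B\otimes\id_A$ to $\sum_i e_i r_i = 1\otimes 1$; these assemble, after a $D$-centralization step, into a separability idempotent in $(D\otimes_R D)^D$. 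Parts (4) and~(6) run the same recipe one level up: depth-$1$, respectively H-depth-$1$, data for $B/C$ supply dual bases, respectively H-separability data, that combine with $e_i,r_i$ to yield central projectivity, respectively H-separability, for $D/R$; equivalently one applies $\Hom_{B\text{-}B}(-,A)$ to the $B$-bimodule similarities for $B/C$, using $\Hom_{B\text{-}B}(B\otimes_C B,A)\cong D$ and $\Hom_{B\text{-}B}(B,A)\cong R$.

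For~(7), under $A^R=B$ and $B_C$ finitely generated projective, the map $d\mapsto bda$ is right-$R$-linear iff $ar = ra$ for all $r\in R$, iff $a\in A^R=B$; flatness of $B_C$ makes $B\otimes_C B\hookrightarrow B\otimes_C A$ injective, so the iso of~(2) restricts to a $B$-bimodule iso $B\otimes_C B\cong \Hom({}_RD_R,{}_RA_R)$. The converses in~(a) and~(b) then follow: an $R$-bimodule splitting $E\colon D\to R$ has image in $R\subseteq A^B$, so its preimage $\tilde E\in B\otimes_C B$ under the restricted iso is $B$-central and satisfies $\mu(\tilde E) = E(1) = 1$; and applying $\Hom_{R\text{-}R}(-,A)$ to a central-projectivity similarity $D\oplus *\cong R^n$ and using $\Hom_{R\text{-}R}(R,A) = A^R = B$ together with~(7)'s iso produces the H-separability similarity $B\otimes_C B\oplus *\cong B^n$ of $B$-bimodules. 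The main obstacle is the bookkeeping in the third paragraph: each candidate idempotent or splitting must be verified to lie in the correct centralizer via a delicate interplay of the $B$-centrality of the $e_i$, the $C$-bimodularity of the splittings, and the inclusion $R\subseteq D$; in particular, because $C\not\subseteq D$ in general, the separability idempotent in~(3) cannot be taken directly as $\sum_i \bar e_i\otimes_R r_i$ but must be further centralized using a second invocation of the relative H-separability data.
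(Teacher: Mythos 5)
Your setup is exactly the paper's: the elements $e_i\in(B\otimes_C A)^B$ and $r_i\in D$ with $\sum_i e_i r_i = 1\otimes 1$, the maps $\mu_d$, and the resulting identity $d=\sum_i e_i^1 d e_i^2 r_i$ with $e_i^1 d e_i^2\in R$ are the paper's $e_i, d_i, h_i$, and your proofs of (1), (2) and (5) are correct (your functorial derivation of (7a$\Leftarrow$) via $\Hom_{R\text{-}R}(-,A)$ is even a clean alternative to the paper's element-chasing). But there are genuine gaps. First, your one-sentence treatment of (4) and (6) states the wrong conclusions: you pair depth-$1$ data for $B/C$ with central projectivity of $D/R$, and H-depth-$1$ data with H-separability of $D/R$, whereas the theorem crosses them: $d(B,C)=1\Rightarrow d_H(R,D)=1$ and $d_H(C,B)=1\Rightarrow d(R,D)=1$. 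Your proposed functor $\Hom_{B\text{-}B}(-,A)$ does prove (6), since it carries ${}_BB\otimes_C B_B\oplus *\cong {}_BB^m_B$ to ${}_RD_R\oplus *\cong {}_RR^m_R$; but it cannot touch (4), whose hypothesis ${}_CB_C\oplus *\cong {}_CC^m_C$ is a $C$-bimodule condition. What (4) requires is the $D$-bimodule isomorphism $D\otimes_R D\cong \Hom({}_CB_C,{}_CA_C)$, $d\otimes d'\mapsto \lambda_d\circ\rho_{d'}$, with inverse $g\mapsto \sum_i g(e_i^1)e_i^2\otimes_R r_i$, to which one applies $\Hom(-,{}_CA_C)$; this isomorphism appears nowhere in your proposal, so (4) is unproved.

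Second, (3) is left hanging: you assert that $\sum_i E(e_i^1)e_i^2\otimes_R r_i$ ``must be further centralized'' and never carry that out, but in fact no further centralization is needed and the claimed obstruction ``$C\not\subseteq D$'' is a red herring. From $B$-centrality of $e_i$ one has $b\otimes 1=\sum_i e_i^1\otimes e_i^2 b r_i$ for all $b\in B$; applying $E\otimes\id_A$ gives $\sum_i E(e_i^1)e_i^2 b r_i = E(b)\in C$, and since every $d\in D=A^C$ commutes with $C$, $D$-centrality of the element follows (most transparently: under the isomorphism $D\otimes_R D\cong\Hom({}_CB_C,{}_CA_C)$ above it corresponds to $E$ itself, which is manifestly $D$-central because it takes values in $C$). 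Finally, in (7) your argument that the isomorphism of (2) restricts only checks that $d\mapsto bda$ is right $R$-linear iff $a\in A^R$ for a \emph{simple} tensor; this yields the inclusion of $B\otimes_C B$ into $\Hom({}_RD_R,{}_RA_R)$ but not surjectivity, which is the actual content of (7) and is precisely where $A^R=B$ and the finite projectivity of $B_C$ must be used, via a hom-tensor chain as in the paper.
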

{\em Proof}. 
The relative H-separability condition (\ref{eq: rel-H-sep}) condition on the tower
$A \supseteq B \supseteq C$ is clearly equivalent to the two conditions
\begin{itemize}
\item there are $g_1,\ldots,g_n \in \Hom ({}_BB \otimes_C A_A, {}_BA_A) \cong D$ via $g_i \mapsto g_i(1\otimes_C 1)$, and $f_1,\ldots,f_n \in \Hom ({}_BA_A, {}_BB \otimes_C A_A) \cong (B \otimes_C A)^B$ via $f_i \mapsto f_i(1)$
such that $\sum_{i=1}^n f_i \circ g_i = \id_{B \otimes_C A}$;
\item there are $e_i \in (B \otimes_C A)^B$ and $d_i \in D$ for $i = 1,\ldots,n$ such that $1 \otimes_C 1 = \sum_{i=1}^n e_i d_i$,
\end{itemize}
since we define $e_i = f_i(1)$ and $d_i = g_i(1 \otimes_C 1) \in A^C$.  
We will make use of the equation 
\begin{equation}
\label{eq: coord}
1 \otimes_C 1 = \sum_i e_i^1 \otimes e_i^2 d_i
\end{equation}
 below in almost every step of the proof below.
\begin{enumerate}
\item Given $d \in D$, $d = \sum_i e_i^1 d e_i^2 d_i$.  Define $h_i \in \Hom ({}_RD, {}_RR)$ by $h_i(d) = e_i^1 d e_i^2$, thus $d = \sum_i h_i(d) d_i$
is a finite projective bases equation.
\item An inverse to $b\otimes_C a \mapsto \lambda_b \circ \rho_a$ is given by sending $f \in \Hom ({}_RD, {}_R A)$ into $\sum_i e_i f(d_i)$. 
\item Given a bimodule projection $E: B \rightarrow C$, note that
applying $E$ to Eq.~(\ref{eq: coord}) yields $1 = \sum_i E(e_i^1)e_i^2d_i$.  
At the same time, a computation shows that $\sum_i E(e_i^1)e_i^2 \otimes_R d_i
\in (D \otimes_R D)^D$.  
\item Trivially $D \cong \Hom ({}_CC_C, {}_CA_C)$, while $D \otimes_R D \cong
\Hom ({}_CB_C, {}_CA_C)$ via $d \otimes_R d' \mapsto \lambda_d \circ \rho_{d'}$ (with inverse given by $g \mapsto \sum_i g(e_i^1)e_i^2 \otimes_R d_i$
for each $g \in \Hom ({}_CB_C, {}_CA_C)$).   The mapping $\mu: D \otimes_R D \rightarrow D$
corresponds under these isomorphisms to restriction $r: \Hom ({}_CB_C, {}_CA_C) \rightarrow \Hom ({}_CC_C, {}_CA_C)$.  If we have the depth one condition ${}_CB_C \oplus * \cong {}_CC^m_C$, then after applying the additive functor $\Hom (- , {}_CA_C)$
and the ($D$-bimodule) isomorphisms just considered, we obtain $D \otimes_R D \oplus * \cong D^m$ as $D$-$D$-bimodules, the H-depth one condition.  
\item If $e \in (B \otimes_C B)^B$ satisfies $e^1 e^2 = 1$, then the mapping
in (2) applied to $e$ is a bimodule projection in $\Hom ({}_RD_R, {}_RR_R)$.
\item If $B$ is an H-separable extension of $C$, there are $t$ elements $z_i \in (B \otimes_C B)^B$ and $t$ elements $r_i \in B^C$ such that $1 \otimes_C 1 = \sum_{i=1}^t z_i r_i$ \cite{NEFE}.  But $ B^C \subseteq A^C = D$
and $d \mapsto z_i^1 d z_i^2$ defines $t$ mappings in $h_i \in \Hom ({}_RD_R, {}_RR_R)$ such that $d = \sum_i h_i(d) r_i$, a centrally projective bases equation for $D \supseteq R$, thus $d(R,D) = 1$.  
\item
First note that ${}_AA \otimes_RD_D \cong {}_A \Hom (B_C, A_C)_D$ via
$a \otimes d \mapsto \lambda_a \circ \rho_d$
(with inverse $f \mapsto \sum_i f(e_i^1)e_i^2 \otimes_R d_i$).  The isomorphism in (2) restricts to the composite isomorphism (using Proposition 20.11 in \cite{AF}) of
$$ {}_B B \otimes_C B_B \cong {}_B B \otimes_C \Hom ({}_AA_R, {}_AA_R)_B \cong {}_B\Hom({}_A\Hom(B_C, A_C)_R, {}_AA_R) $$
(since $A^R = B (\cong \Hom ({}_AA, {}_AA)^R)$ and $B_C$ is finite projective)
$$\cong {}_B\Hom({}_AA \otimes_R D_R, {}_AA_R)_B \cong \Hom({}_RD_R, {}_R\Hom ({}_AA, {}_AA)_R) $$
$$ \cong {}_B\Hom ({}_RD_R, {}_RA_R)_B. $$  

\item (7a $\Leftarrow$) Suppose $h_j \in \Hom ({}_RD_R, {}_RR_R)$ and $w_j \in D^R$ satisfy $\id_D = \sum_j h_j(-) w_j$.  Then using the isomorphism in (7)
there are $e_j \in (B \otimes_C B)^B \cong \Hom ({}_RD_R, {}_RR_R)$
such that $h_j (d) = e_j^1de_j^2$ for all $d \in D$.  Note that $w_j \in D^R \subseteq A^R = B$ and $w_j \in D = A^C$, whence $w_j \in B^C$.  
It follows from the isomorphism (7) that $1 \otimes_C 1 = \sum_j e_j w_j$
an equivalent condition for H-separability,  $d_H(C,B) =1$.

\item (7b $\Leftarrow$)  Given a projection $E: {}_RD_R \rightarrow {}_RR_R$ 
one notes that $E \in \Hom ({}_RD_R, {}_RA_R)^B \cong (B \otimes_C B)^B$,
so that there is $e \in (B \otimes_C B)^B$ such that $E(d) = e^1de^2$ for all
$d \in D$.  In particular, $e^1e^2 = E(1) = 1$.  
\end{enumerate}   

Since H-separability implies separability for ring extension, we might expect
some mild condition should imply the same for towers of rings.  The next corollary addresses this question.
\begin{cor}
Suppose $A \supseteq B \supseteq C$ is a left relative H-separable tower satisfying 
$D = A^C$ is a left split extension of $R = A^B$.  Then $A \supseteq B \supseteq C$ is left relative separable. 
\end{cor}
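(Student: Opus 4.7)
The plan is to apply the $B$-$A$-bimodule isomorphism $\varphi: B\otimes_C A \stackrel{\sim}{\to} \Hom({}_RD, {}_RA)$ from part~(2) of Theorem~\ref{th-Sug}, whose inverse is $\psi: f \mapsto \sum_i e_i f(d_i)$, in order to transport the hypothesized left $R$-linear splitting $E: D \to R$ (with $E(1) = 1$, viewed as an element of $\Hom({}_RD, {}_RA)$ by post-composition with $R \subseteq A$) back to an element of $B \otimes_C A$. Concretely, I would set $e := \psi(E) = \sum_i e_i E(d_i) \in B \otimes_C A$; the goal is then to show that $e \in (B \otimes_C A)^B$ and $\mu(e) = 1$, which is exactly the left relative separability condition.

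For $B$-centrality, note that under $\varphi$ the bimodule structure on $\Hom({}_RD, {}_RA)$ is $(b\cdot f\cdot a)(d) = b\, f(d)\, a$, so an element $f$ is $B$-central precisely when $f(d)$ commutes with every $b \in B$ for all $d \in D$, i.e., when $f$ takes values in $A^B = R$. Since $E$ lands in $R$ by hypothesis, $E$ is a $B$-central element of $\Hom({}_RD, {}_RA)$, and therefore $e = \psi(E)$ is $B$-central in $B \otimes_C A$.

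For $\mu(e) = 1$, I would use the identity $\varphi\psi = \id$, which unravels to $\sum_i e_i^1\, d\, e_i^2\, E(d_i) = E(d)$ for every $d \in D$. Setting $d = 1$ yields $\sum_i e_i^1 e_i^2 E(d_i) = E(1) = 1$, i.e.\ $\mu(e) = 1$, as required. The only conceptual move is to recognize that Sugano's isomorphism converts the desired relative separability element of $B \otimes_C A$ into a left $R$-linear section $D \to R$; once this dictionary is in place, the corollary reduces to specializing the coordinate identity (\ref{eq: coord}) in the presence of $E$, so no serious obstacle arises.
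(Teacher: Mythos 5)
Your proof is correct and is essentially the paper's own argument: the paper likewise invokes Theorem~\ref{th-Sug}(2) to identify $\mu$ with the $\Hom(-,{}_RA)$-dual of the split inclusion $\iota:{}_RR\to {}_RD$, so that the left $R$-linear retraction $E$ dualizes to a $B$-$A$-bimodule section of $\mu$. You merely make the resulting relative separability element $e=\sum_i e_i E(d_i)=\sigma(1)$ explicit and verify its $B$-centrality and $\mu(e)=1$ by hand via Eq.~(\ref{eq: coord}), which checks out.
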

\begin{proof}
Applying (2) of Sugano's theorem, note that $\mu: B \otimes_C A \rightarrow A$
corresponds to the $A$-dual of the inclusion $\iota: {}_RR \rightarrow {}_RA$, which is $\iota^*: \Hom ({}_RD,{}_RA) \rightarrow \Hom ({}_RR,{}_RA) \cong {}_BA_A$.  If $\iota$ is a split monic, then $\iota^*$ and $\mu$ are split $B$-$A$-epimorphisms. 
\end{proof}
\begin{cor}
\label{cor-interesting}
Suppose $K$ is a commutative ring, $A$ is a $K$-algebra with $B$ a $K$-subalgebra satisfying the $B$-$A$-bimodule generator condition $B \otimes_K A \oplus * \cong A^n$ (for some $n \in \N$).
Let $R$ be the centralizer $ A^B$.  The following holds: 
\begin{enumerate}
\item ${}_RA$ is a finite projective module;
\item $B \otimes_K A \cong \End {}_RA$ via $b \otimes a \mapsto \lambda_b \circ \rho_a$;
\item if $B$ has a $K$-linear projection onto $K1$, then $A$ is a separable extension of $R$;
\item if $B$ is finite projective as a $K$-module, then $A$ is an H-separable extension of $R$;
\item if $B$ is a separable $K$-algebra, then $A$ is a progenerator $B$-$A$-bimodule and  $A \supseteq R$ is a split extension;
\item if $B$ is an Azumaya algebra with center $Z$ such that $Z \otimes_K  Z\cong Z$ (via $\mu$),
then $A$ is centrally projective over its subalgebra $R$;
\item if $A^R = B$ and $B$ is a finite projective $K$-module, then the isomorphism in (2) restricts to $B \otimes_K B \cong \End {}_RA_R$
and (5), (6) become iff statements.
\end{enumerate}
\end{cor}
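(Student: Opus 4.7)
The plan is to view this corollary as Sugano's Theorem~\ref{th-Sug} specialized to the tower $A \supseteq B \supseteq K\cdot 1$, with $C$ taken to be the image of the structure map $K \to Z(A)$. Then $D := A^C = A$ (since scalars are central), $R := A^B$ is the centralizer already named, and the hypothesis $B \otimes_K A \oplus * \cong A^n$ is exactly the left relative H-separability condition~(\ref{eq: rel-H-sep}) for this tower. Parts~(1) and~(2) are then direct translations of Sugano~(1) and~(2); in particular $\Hom({}_RD, {}_RA) = \End {}_RA$.

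Parts~(3) and~(4) reduce to their Sugano counterparts via the observation that, because $K$ is central, the $(K,K)$-bimodule structure on any $K$-module coincides with the underlying $K$-module structure. Hence a $K$-linear projection $B \to K\cdot 1$ is automatically a bimodule splitting (yielding (3) from Sugano~(3)), and the bimodule depth-one condition ${}_KB_K \oplus * \cong {}_KK^n_K$ amounts to $B$ being finitely generated projective over $K$ (yielding (4) from Sugano~(4)). For (5), Sugano~(5) produces the split extension $A \supseteq R$ directly, and I would handle the additional progenerator claim as follows: a separability idempotent for $B \supseteq K$ yields a $B$-bimodule section of $\mu_B$, and tensoring by $-\otimes_B A$ gives a $B$-$A$-bimodule section of $\mu_A: B \otimes_K A \to A$, exhibiting $A$ as a direct summand of the free rank-one $B$-$A$-bimodule $B \otimes_K A$ (hence $A$ is projective); meanwhile the hypothesis $B \otimes_K A \oplus * \cong A^n$ displays $A^n$ with that rank-one free summand as a direct summand, forcing $A$ itself to generate.

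For (6), the substantive step is to deduce H-separability of $B$ over $K$ from the Azumaya-plus-solid hypothesis: the isomorphism $\mu: Z \otimes_K Z \cong Z$ implies $z \otimes_K 1 = 1 \otimes_K z$ in $Z \otimes_K Z$, so the canonical $B$-bimodule surjection $B \otimes_K B \to B \otimes_Z B$ is an isomorphism; classical Azumaya theory then supplies $B \otimes_Z B \oplus * \cong B^m$ as $B$-bimodules, and Sugano~(6) gives the central projectivity of $A$ over $R$. Finally, (7) is a direct transcription of Sugano~(7), with the iff upgrades of (5) and (6) coming from (7b) and (7a) applied with $C = K$, $D = A$. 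The main obstacle I anticipate is reading the ``iff'' in (7) for statement~(6): Sugano~(7a) yields the equivalence ``$B$ H-separable over $K$ iff $A$ centrally projective over $R$'' rather than the literal Azumaya-plus-solid equivalence, so the intended interpretation is presumably that H-separability of $B$ over $K$---the sufficient condition actually invoked in the proof of (6)---is precisely what becomes necessary and sufficient, with Azumaya-plus-solid serving as a convenient source for it.
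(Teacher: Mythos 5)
Your proposal is correct and follows essentially the same route as the paper: specialize Sugano's Theorem~\ref{th-Sug} to the tower $A \supseteq B \supseteq K1$, so that $D = A^{K1} = A$, $R = A^B$, and the generator hypothesis is exactly the left relative H-separability condition, with (5) handled by the same separability-idempotent splitting of $\mu: B \otimes_K A \to A$. Your one hesitation --- the ``iff'' in (7) for statement (6) --- is resolved because the paper invokes Sugano's characterization as a full biconditional ($B$ is H-separable over $K$ if and only if $B$ is Azumaya over its center $Z$ with $Z \otimes_K Z \cong Z$), which combined with (7a) yields the stated equivalence; you proved only the forward direction of that characterization but the converse is the cited result.
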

\begin{proof}
The proof follows from Sugano's theorem by letting $C = K1$, the unit subalgebra in $A$ and $B$.
In   (6) and (7) we make use of Sugano's characterization of a H-separable $K$-algebra $B$ as being
Azumaya over its center $Z$ subject to the condition $\mu: Z \otimes_K Z \stackrel{\cong}{\longrightarrow} Z$.  In (5), the bimodule ${}_BA_A$ is already noted to be a generator, and it is
finite projective, since given any $B$-$A$-bimodule ${}_BM_A$ and $B$-$A$-epimorphism $\phi:  M \rightarrow A$, $\phi$ is split by $a \mapsto e^1me^2 a$ where $e \in B^e$ is a separability idempotent and $\phi(m) = 1$. 
\end{proof}

A converse to Lemma~\ref{lemma-hsep} is given in the following.  The hypothesis of cleft extension in  the corollary
is fullfilled for example by any finite-dimensional $A$ with nilradical $J$ and separable subalgebra
$B \cong A/J$ (using Wedderburn's Principal Theorem).  
\begin{cor}
Suppose $\pi: {}_BA_B \rightarrow {}_BB_B$ is a ring epimorphism splitting $A \supseteq B$ (a so-called cleft extension), and $C$ is a subring of $B$ such that the left relative H-separable tower condition holds.
Then $B \supseteq C$ is H-separable (i.e., $d_H(C,B) = 1$).  
\end{cor}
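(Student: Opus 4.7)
\medskip

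\noindent \textbf{Proof plan.} The plan is to transport the relative H-separability equation (\ref{eq: coord}) across the ring retraction $\pi$ to produce a classical H-separability equation for $B \supseteq C$. By the opening paragraph of the proof of Theorem~\ref{th-Sug}, the left relative H-separable tower condition on $A \supseteq B \supseteq C$ is equivalent to the existence of elements $e_i \in (B \otimes_C A)^B$ and $d_i \in D = A^C$ ($i = 1,\ldots,n$) satisfying
\begin{equation*}
1 \otimes_C 1 \;=\; \sum_{i=1}^n e_i^1 \otimes_C e_i^2 d_i \qquad \text{in } B \otimes_C A.
\end{equation*}
The classical characterization of H-separability of $B \supseteq C$ (as recalled in the proof of Proposition~\ref{folklore?} and item (6) of Theorem~\ref{th-Sug}) asks instead for $\tilde e_j \in (B \otimes_C B)^B$ and $\tilde r_j \in B^C$ with $1 \otimes_C 1 = \sum_j \tilde e_j \tilde r_j$ in $B \otimes_C B$.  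So one just needs to push the data $(e_i, d_i)$ down from $A$ to $B$.

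\medskip

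\noindent First I would apply the additive map $\id_B \otimes_C \pi : B \otimes_C A \to B \otimes_C B$, which is well-defined since $\pi$ is left $C$-linear. Set
\begin{equation*}
\tilde e_i \;:=\; e_i^1 \otimes_C \pi(e_i^2) \;=\; (\id_B \otimes_C \pi)(e_i), \qquad \tilde r_i \;:=\; \pi(d_i).
\end{equation*}
Because $\pi$ is a $B$-$B$-bimodule map, $\id_B \otimes_C \pi$ carries $B$-central elements to $B$-central elements, so $\tilde e_i \in (B \otimes_C B)^B$. Because $\pi$ is $B$-$B$-bilinear and $d_i \in A^C$, for every $c \in C \subseteq B$ we have $c \tilde r_i = \pi(c d_i) = \pi(d_i c) = \tilde r_i c$, hence $\tilde r_i \in B^C$.

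\medskip

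\noindent Next I would apply $\id_B \otimes_C \pi$ to the equation above. Using $\pi(1)=1$ on the left and the fact that $\pi$ is a ring homomorphism on the right, one obtains
\begin{equation*}
1 \otimes_C 1 \;=\; \sum_i e_i^1 \otimes_C \pi(e_i^2 d_i) \;=\; \sum_i e_i^1 \otimes_C \pi(e_i^2)\pi(d_i) \;=\; \sum_i \tilde e_i\, \tilde r_i
\end{equation*}
in $B \otimes_C B$, which is exactly the H-separability condition for $B \supseteq C$. This yields $d_H(C,B) = 1$ as required.

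\medskip

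\noindent The only point requiring care, and the one I would view as the main (if mild) obstacle, is that the multiplicativity of $\pi$ is genuinely used in the last displayed equation; a mere bimodule splitting would give an equation in $B \otimes_C B$ whose right-hand side cannot in general be rewritten as a sum of products $\tilde e_i \tilde r_i$. Thus the \emph{cleft} (i.e.\ ring-homomorphism) hypothesis on $\pi$ is essential, not just a $B$-bimodule retraction as in Lemma~\ref{lemma-relsep}.
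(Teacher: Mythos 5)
Your proof is correct and follows essentially the same route as the paper: apply $\id_B \otimes_C \pi$ to Eq.~(\ref{eq: coord}), use multiplicativity of $\pi$ to split $\pi(e_i^2 d_i) = \pi(e_i^2)\pi(d_i)$, and check that $e_i^1 \otimes_C \pi(e_i^2) \in (B \otimes_C B)^B$ and $\pi(d_i) \in B^C$, which is exactly the Casimir-element characterization of H-separability invoked in the paper. Your closing remark on why the ring-homomorphism hypothesis (rather than a mere bimodule retraction) is essential is a correct and worthwhile observation, though the paper does not spell it out.
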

\begin{proof}
Apply $\id_B \otimes_C \pi$ to the decomposition of $1 \otimes_C 1$ given in Eq.~(\ref{eq: coord}). We obtain $1 \otimes_C 1 = \sum_i e_i^1 \otimes \pi(e_i^2) \pi(d_i)$ where each
$e_i^1 \otimes_C \pi(e_i^2) \in (B \otimes_C B)^B$ and each $\pi(d_i) \in B^C$: possessing Casimir elements and centralizer elements like these characterizes H-separability of $B$ over $C$.  
\end{proof}
\section{Subring depth in a relative separable tower}
\label{two}
The progenerator condition in Corollary~\ref{cor-interesting} is used again in the hypothesis 
of the proposition below.  
\begin{prop}
Suppose a finitely generated projective $K$-algebra $A$ has  subalgebra $B$ such that $A$ is a progenerator $B$-$A$-bimodule.  Then $A \supseteq B$ is left normal. 
\end{prop}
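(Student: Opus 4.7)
The plan is to prove the left depth-two condition $A \otimes_B A \oplus * \cong A^n$ as $B$-$A$-bimodules via a two-step reduction: first realize $A \otimes_B A$ as a bimodule summand of $A \otimes_K A$ (using the f.g.\ projective part of the progenerator), then realize $A \otimes_K A$ as a bimodule summand of a finite power of $A$ (using the generator part of the progenerator together with $K$-projectivity of $A$).

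The first step uses the bimodule iso $A\oplus X\cong(B\otimes_K A)^r$ arising from f.g.\ projectivity: bimodule linearity of the inclusion $j\colon A\hookrightarrow(B\otimes_K A)^r$ forces $B$-centrality of the image $j(1)=(e_i)\in((B\otimes_K A)^B)^r$, while the retraction produces elements $a_i\in A$ (one per summand) with $\sum_i e_i^1 a_i e_i^2 = 1$. Then $s(a\otimes_B c):=\sum_i ae_i^1\otimes_K a_i e_i^2 c$ defines a $B$-$A$-bilinear section of the canonical surjection $A\otimes_K A\twoheadrightarrow A\otimes_B A$: well-definedness over $B$ reduces to the identity $be_i^1\otimes e_i^2=e_i^1\otimes e_i^2 b$ in $A\otimes_K A$ (a consequence of $B$-centrality), and the splitting property follows from $\sum_i e_i^1 a_i e_i^2=1$.

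The second step begins by upgrading the progenerator and $K$-projectivity hypotheses to the statement that $A$ is f.g.\ projective as a \emph{left} $B$-module. Because $A\oplus Q\cong K^s$ as $K$-modules, tensoring with $B$ on the left yields $B\otimes_K A\oplus (B\otimes_K Q)\cong B^s$, so $B\otimes_K A$ is f.g.\ projective as a left $B$-module; restricting the bimodule iso $A\oplus X\cong(B\otimes_K A)^r$ to left $B$-modules then transfers projectivity to $A$, say $A\oplus Y\cong B^m$. Apply the additive functor $F(M):=M\otimes_K A$ from left $B$-modules to $B$-$A$-bimodules (left $B$ acting on $M$, right $A$ acting on the new factor), which sends $B$ to the free bimodule $B\otimes_K A$: this yields $A\otimes_K A\oplus(Y\otimes_K A)\cong(B\otimes_K A)^m$ as $B$-$A$-bimodules. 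The generator condition $B\otimes_K A\oplus P\cong A^n$ raised to the $m$-th power then realizes $A\otimes_K A$ as a bimodule summand of $A^{nm}$, and composing with the first step yields $A\otimes_B A\oplus *\cong A^{nm}$.

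The main obstacle is the second step, in particular recognizing $A$ as a f.g.\ projective \emph{left} $B$-module by combining both halves of the progenerator hypothesis with the $K$-projectivity of $A$; this is the one point where all three pieces of the hypothesis must be used simultaneously.
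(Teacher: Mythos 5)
Your proof is correct and follows the same overall strategy as the paper's: realize ${}_BA\otimes_BA_A$ as a bimodule summand of $A\otimes_KA$ using the projectivity half of the progenerator hypothesis, then as a summand of $(B\otimes_KA)^m$ using finite projectivity of ${}_BA$, and finally as a summand of $A^{nm}$ using the generator half. Two points of divergence are worth noting. First, where you construct an explicit $B$-$A$-bilinear section of $A\otimes_KA\twoheadrightarrow A\otimes_BA$ out of the elements $e_i$ and $a_i$ (a computation that does check out, including the balancedness over $B$ via centrality of the $e_i$), the paper simply tensors the split epimorphism $\mu\colon B\otimes_KA\to A$ by $A\otimes_B-$; these are the same content packaged differently. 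Second, and more substantively, the paper obtains ${}_BA\oplus *\cong{}_BB^m$ by asserting that $B$ is a separable $K$-algebra, so that every $B$-module is $K$-relatively projective --- a hypothesis that is present in part (5) of Corollary~\ref{cor-interesting}, where the progenerator condition is \emph{derived} from separability, but is not part of the statement of this proposition. Your route --- restricting the bimodule splitting $A\oplus X\cong(B\otimes_KA)^r$ to left $B$-modules and using $A\oplus Q\cong K^s$ to see that $B\otimes_KA$ is finite projective over $B$ --- extracts the same conclusion from exactly the stated hypotheses, so your argument is the more self-contained of the two at that step.
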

\begin{proof}
Since $\mu: B \otimes_K A \rightarrow A$ splits, ${}_BA_A \oplus * \cong B \otimes_K A$; thus tensoring by $A \otimes_B -$ we obtain
$A \otimes_B A \oplus * \cong A \otimes_K A$ as natural $A$-bimodules.  Since $B$ is a separable algebra, any $B$-module is $K$-relative projective, whence by the hypothesis on $A$, ${}_BA \oplus * \cong {}_BB^m$ and so
$A \otimes_B A \oplus * \cong B \otimes_K A^m$ as $B$-$A$-bimodules.  Since ${}_BA_A$ is a generator, it follows that $B \otimes_K A \oplus * \cong {}_BA_A^q$, whence ${}_B A \otimes_B A_A \oplus * \cong
{}_BA_A^{mq}$, the left depth $2$ condition on $A \supseteq B$.  
\end{proof}
\begin{prop}
\label{prop-equalityofHdepth}
Suppose a tower of rings $A \supseteq B \supseteq C$ satisfies the left relative H-separability condition
${}_BB \otimes_C A_A \oplus * \cong {}_BA^n_A$ and the left relative separability condition
${}_BA_A \oplus * \cong {}_BB \otimes_C A_A$.  Then $d_H(B, A) = d_H(C, A)$.
\end{prop}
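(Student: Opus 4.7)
The plan is to combine the two hypotheses into a single H-equivalence of $B$-$A$-bimodules, propagate it through all tensor powers by induction, and then read off the equality of H-depths from the definition. The left relative H-separability condition gives ${}_B B \otimes_C A_A \oplus * \cong {}_B A^n_A$, while the left relative separability condition gives ${}_B A_A \oplus * \cong {}_B B \otimes_C A_A$; together they yield an H-equivalence ${}_B A_A \sim {}_B (B \otimes_C A)_A$ of $B$-$A$-bimodules.

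I would then show by induction on $k \ge 1$ that $A^{\otimes_B k} \sim A^{\otimes_C k}$ as $A$-$A$-bimodules. The base case $k = 1$ is trivial. For the inductive step, I apply two additive functors to the equivalences already established. First, restricting the inductive hypothesis to $A$-$B$-bimodules and applying $(-) \otimes_B A \colon {}_A \M_B \to {}_A \M_A$ gives $A^{\otimes_B (k+1)} \sim A^{\otimes_C k} \otimes_B A$ as $A$-$A$-bimodules, since additive functors preserve H-equivalence. Second, viewing $A^{\otimes_C k}$ as an $A$-$B$-bimodule by restriction on the right and applying $A^{\otimes_C k} \otimes_B (-) \colon {}_B \M_A \to {}_A \M_A$ to the equivalence ${}_B A_A \sim {}_B (B \otimes_C A)_A$ gives $A^{\otimes_C k} \otimes_B A \sim A^{\otimes_C k} \otimes_B (B \otimes_C A) \cong A^{\otimes_C (k+1)}$, using the canonical absorption $A^{\otimes_C k} \otimes_B B \otimes_C A \cong A^{\otimes_C (k+1)}$. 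Transitivity of H-equivalence then closes the induction.

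With this in hand, the conclusion is immediate. By definition, the condition $d_H(X,A) \le 2n-1$ is $A^{\otimes_X n} \sim A^{\otimes_X (n+1)}$ as $A$-bimodules, and composing the H-equivalences $A^{\otimes_B n} \sim A^{\otimes_C n}$ and $A^{\otimes_B (n+1)} \sim A^{\otimes_C (n+1)}$ shows this condition holds for $X = B$ if and only if it holds for $X = C$; thus $d_H(B,A) = d_H(C,A)$. The only step calling for care is the bookkeeping of one-sided module structures during the functorial manipulations and invoking the absorption of $\otimes_B B$ at the right moment; once this is handled the argument involves no genuine obstacle.
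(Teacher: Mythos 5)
Your argument is correct and follows essentially the same route as the paper: both first combine the two hypotheses into the H-equivalence $A \otimes_B A \sim A \otimes_C A$ (the paper by tensoring each condition with ${}_AA \otimes_B -$, you by packaging them as ${}_BA_A \sim {}_B(B \otimes_C A)_A$ first), then propagate it inductively to $A^{\otimes_B m} \sim A^{\otimes_C m}$ for all $m$, and finally read off $d_H(B,A) = d_H(C,A)$ from the definition. The only difference is cosmetic bookkeeping in the inductive step (you absorb $\otimes_B B$ on the right where the paper splices $A \otimes_C A$ in on the left), so no further comparison is needed.
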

\begin{proof}
Tensoring by ${}_AA \otimes_B -$ the left relative H-separability condition yields
${}_AA \otimes_C A_A \oplus * \cong {}_AA \otimes_B A^n_A$.  Tensoring by
${}_AA \otimes_B -$ the left relative separability condition above yields
$A \otimes_B A \oplus * \cong A \otimes_C A$ as natural $A$-bimodules, whence
$A \otimes_B A$ and $A \otimes_C A$ are H-equivalent as $A$-$A$-bimodules.  

 Suppose that $A^{\otimes_B n}$ is H-equivalent to $A^{\otimes_C n}$ for any $m > n \geq 2$. Then $A^{\otimes_B (m-1)}$ and $A^{\otimes_C (m-1)}$ are H-equivalent, so $A \otimes_C A^{\otimes_B (m-1)}$ and $A^{\otimes_C m}$
are H-equivalent, as are $(A \otimes_C A) \otimes_B \cdots \otimes_B A$
and $(A \otimes_B A) \otimes_B \cdots \otimes_B A$.  It follows from this inductive argument 
that $A^{\otimes_B m}$ and $A^{\otimes_C m}$ are H-equivalent as $A$-bimodules for any $m > 1$.  

Suppose $A \supseteq B$ has H-depth $1$, equivalently, $A$ and $A \otimes_B A$ are H-equivalent, which is equivalent to $A$ and $A \otimes_C A$ being H-equivalent iff $A \supseteq C$ has H-depth $1$. From  the definition of H-depth in Section~1 and the H-equivalences noted above, $A \supseteq B$ has H-depth $n$ iff $A \supseteq C$ has H-depth $n$
for any $n \geq 1$.  
\end{proof}
We improve on \cite[Theorem 2.3]{LK2008a} next.
\begin{prop}
Suppose $B$ is an Azumaya $K$-algebra and subalgebra of a finitely generated projective $K$-algebra
$A$.  Then $A \supseteq B$ has depth~$1$.
\end{prop}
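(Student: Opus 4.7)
The plan is to reduce the claim to the classical commutant theorem for an Azumaya subalgebra, after first showing that the centralizer $R := A^B$ is finitely generated projective over $K$.  For the latter, since $B$ is Azumaya and hence separable over $K$, I would fix a separability idempotent $e = \sum e^1 \otimes e^2 \in B \otimes_K B$ with $e^1 e^2 = 1$ and centrality $\sum b e^1 \otimes e^2 = \sum e^1 \otimes e^2 b$ for every $b \in B$.  The $K$-linear map $\pi: A \to A$ defined by $\pi(a) = \sum e^1 a e^2$ lands in $R$ by the centrality of $e$ and restricts to the identity on $R$ by $e^1 e^2 = 1$, so $\pi$ realizes $R$ as a $K$-linear direct summand of $A$.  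Since $A$ is finitely generated projective over $K$ by hypothesis, so is $R$, and I can fix an isomorphism $R \oplus Y \cong K^n$ of $K$-modules for some $K$-module $Y$ and $n \in \N$.

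Next, I would invoke the classical commutant theorem for an Azumaya $K$-subalgebra: multiplication gives a $B$-bimodule isomorphism $B \otimes_K R \cong A$, $b \otimes r \mapsto br$, where the $B$-bimodule structure on $B \otimes_K R$ acts on the $B$-tensorand (consistent with the $A$-structure because $R$ is $B$-central).  Applying the additive functor $B \otimes_K -$ to the $K$-module decomposition $R \oplus Y \cong K^n$ yields $(B \otimes_K R) \oplus (B \otimes_K Y) \cong B \otimes_K K^n \cong B^n$ as $B$-bimodules.  Combining with the commutant isomorphism gives ${}_BA_B \oplus * \cong {}_BB_B^n$, the depth one condition on $A \supseteq B$.

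The main obstacle is justifying the commutant isomorphism used in the second step.  This is classical in the theory of Azumaya algebras (see, for example, DeMeyer--Ingraham or Knus--Ojanguren) and follows from the Morita equivalence between $B$-bimodules and $K$-modules afforded by the Azumaya isomorphism $B \otimes_K B^{\rm op} \cong \End_K B$; a brief write-up should cite it rather than reprove it.  All other steps are routine manipulations of direct summands.
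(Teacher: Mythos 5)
Your proof is correct, but it takes a genuinely different route from the paper's. The paper argues entirely with direct summands over the enveloping algebra: since $B$ is Azumaya, ${}_BB_B$ is a progenerator $B^e$-module, and since $B^e$ is a separable extension of $K$, the $K$-finite-projective bimodule ${}_BA_B$ is relatively projective and hence a summand of $(B^e)^m$; combining this with the generator half of the progenerator property, $B^e \oplus * \cong {}_BB_B^n$, gives ${}_BA_B \oplus * \cong {}_BB_B^{mn}$. You instead make the centralizer the protagonist: the averaging map $a \mapsto \sum e^1 a e^2$ exhibits $R = A^B$ as a $K$-direct summand of $A$, hence finitely generated projective over $K$, and the commutant theorem supplies the exact decomposition $A \cong B \otimes_K R$ as $B$-bimodules, after which tensoring $R \oplus Y \cong K^n$ with $B \otimes_K -$ finishes the argument. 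Both proofs ultimately rest on the same fact --- the Morita equivalence between $B^e$-modules and $K$-modules afforded by the Azumaya property --- but you invoke it in the stronger form of the commutant theorem, which buys you the precise isomorphism $A \cong B \otimes_K A^B$ (consistent with the paper's earlier remark that every depth-one extension satisfies $A \cong B \otimes_{Z(B)} A^B$), whereas the paper's summand-chase is more self-contained, needing only the progenerator property and relative projectivity rather than the double-centralizer machinery. Your appeal to the commutant theorem is legitimate here since $K$ is central in $A$, which is all that the cited classical statement requires; the remaining steps are sound.
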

\begin{proof}
Since $B$ is Azumaya, it is well-known that $B$ is a progenerator $B^e$-module (e.g.\ \cite{NEFE}).     Since $B$ is a separable $K$-algebra, $B^e$ is a semisimple extension of $K1$.  Then ${}_BA_B$ is $K$-relative projective, therefore ${}_BA_B$ is finite 
projective since $A$ is projective over $K$.  Thus $A \oplus * \cong B\otimes_K B^m$ for some
$m \in \N$.  But $B^e \oplus * \cong {}_BB_B^n$ for some $n \in \N$ since ${}_BB_B$ is a generator.
Putting these together, ${}_BA_B \oplus * \cong {}_BB_B^{mn}$.  
\end{proof}

\subsection{Higman-Jans-like theorem}  Higman's theorem in \cite{H} states that a finite-dimensional group algebra $kG$ where $k$ is a field of characteristic $p$ has finite representation type if and only if the Sylow $p$-subgroup of $G$ is cyclic.  The proof was teased apart by Jans in \cite{J} into two statements about the property of finite representation type of a subalgebra pair of Artin algebras going up or down according to whether
$A$ is a split or separable extension of $B$; e.g., a separable  and finitely generated extension $A \supseteq B$ of Artin algebras where $B$ has finitely many isoclasses of indecomposable modules implies that also $A$
has finite representation type; see also \cite[pp.\ 173-174]{P}.  In generalizing this theorem, we
first need a lemma characterizing left relative separable towers of rings $A \supseteq B \supseteq C$
in terms of modules.
\begin{lemma}
$B$ is a left relative separable extension of $C$ in $A$ if and only if for each module ${}_AM$,
the mapping $\mu_M: B \otimes_C M \rightarrow M$ given by $b \otimes_C m \mapsto bm$ splits
naturally as a left $B$-module epimorphism.  
\end{lemma}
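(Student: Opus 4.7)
My plan is to show the equivalence by passing through the characterization of the left relative separability condition in terms of a $B$-central element $e \in (B \otimes_C A)^B$ with $e^1 e^2 = 1$, which is established just before Lemma~\ref{lemma-relsep} in the paper. Thus both directions reduce to producing such an $e$ from the given data on one side, and conversely manufacturing a family of natural splittings $\sigma_M$ from such an $e$.

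For the forward direction ($\Rightarrow$), I would assume $e \in (B \otimes_C A)^B$ satisfies $e^1 e^2 = 1$ and, for each left $A$-module ${}_AM$, define $\sigma_M : M \to B \otimes_C M$ by $\sigma_M(m) = e^1 \otimes_C e^2 m$. The computation $\mu_M \sigma_M(m) = e^1 e^2 m = m$ gives splitting; left $B$-linearity follows from $B$-centrality of $e$, since $\sigma_M(bm) = e^1 \otimes_C e^2 b m = b e^1 \otimes_C e^2 m = b\sigma_M(m)$; and naturality in $M$ along an $A$-linear map $f : M \to N$ follows from $(\id_B \otimes f)(e^1 \otimes_C e^2 m) = e^1 \otimes_C e^2 f(m)$, which uses only that $f$ commutes with left multiplication by the fixed element $e^2 \in A$.

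The reverse direction ($\Leftarrow$) is the conceptually interesting step, and where I expect the only subtlety to lie. I would specialize the hypothesis to $M = A$, regarded as a left $A$-module in the standard way, to obtain a left $B$-linear splitting $\sigma_A : A \to B \otimes_C A$ of $\mu_A$. Set $e := \sigma_A(1)$; then $e^1 e^2 = \mu_A(e) = 1$ immediately. For the $B$-centrality of $e$, I would invoke naturality against the right multiplication map $\rho_b : A \to A$, $x \mapsto xb$, for $b \in B$: this is a left $A$-module endomorphism, so naturality gives $\sigma_A(b) = \sigma_A(\rho_b(1)) = (\id_B \otimes \rho_b)\sigma_A(1) = e^1 \otimes_C e^2 b$, while left $B$-linearity gives $\sigma_A(b) = b\sigma_A(1) = b e^1 \otimes_C e^2$. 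Comparing yields $be^1 \otimes_C e^2 = e^1 \otimes_C e^2 b$, that is, $e \in (B \otimes_C A)^B$. The main obstacle is recognizing that naturality applied to the right-multiplication endomorphisms of ${}_AA$ is precisely the bridge that upgrades the purely left $B$-linear splitting into the $B$-central separability datum; once this observation is in hand, no further calculation is needed.
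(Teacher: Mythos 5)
Your proposal is correct and follows essentially the same route as the paper: the forward direction via the element $e$ is just the explicit, element-wise form of tensoring the $B$-$A$-bimodule splitting of $\mu$ with $-\otimes_A M$, and the reverse direction is exactly the paper's ``apply the hypothesis to $M=A$ and use naturality'' (naturality against the right multiplications $\rho_b$ being precisely what upgrades the left $B$-linear section to the $B$-central element, equivalently to a $B$-$A$-bimodule splitting). No gaps.
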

\begin{proof}
($\Rightarrow$) This is clear from tensoring the split epi $\mu: B \otimes_C A \rightarrow A$
by $-\otimes_A M$ to obtain the split epi $\mu_M$.  ($\Leftarrow$)  Apply the hypothesis to $M = A$
and use naturality to obtain a split $B$-$A$-bimodule epi $\mu: B \otimes_C A \rightarrow A$.   
\end{proof}
Let $A$ be an Artin algebra,  $A\!-\!\mbox{\rm mod}$ denote the category of finitely generated left $A$-modules, and $\mbox{add}\, M$ denote the category of summands of finite sums of copies of a module $M$. 
\begin{theorem}
Suppose $A \supseteq B \supseteq C$ is a left relative separable tower of Artin algebras, where ${}_BA$ and ${}_CB$ are finitely generated.  Suppose $C\!-\!\mbox{\rm mod}$
has finitely many isoclasses of indecomposable representatives $V_1,\ldots,V_n$. Then the restriction functor $\Res^A_B: A\!-\!\mbox{\rm mod} \rightarrow B\!-\!\mbox{\rm mod}$ factors through the subcategory $\mbox{\em add}\, \oplus_{i=1}^n  B \otimes_C V_i$. 
\end{theorem}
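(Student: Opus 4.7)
The plan is to combine the immediately preceding lemma with the fact that finite generation descends the tower, together with Krull--Schmidt on the Artin algebra $C$.

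Let $M$ be an object of $A\!-\!\mbox{\rm mod}$. By the preceding lemma applied to ${}_AM$, the multiplication map $\mu_M: B \otimes_C M \to M$ is a split $B$-module epimorphism, so
\[
{}_BM \oplus * \cong {}_B(B \otimes_C M).
\]
Thus the whole task reduces to showing that ${}_B(B \otimes_C M)$ lies in $\mbox{add}\, \oplus_{i=1}^n B \otimes_C V_i$.

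Next I would check that ${}_CM$ is a finitely generated $C$-module. Since ${}_AM$ is finitely generated and ${}_BA$ is finitely generated, ${}_BM$ is finitely generated; and since ${}_CB$ is finitely generated, ${}_CM$ is finitely generated as well. Because $C$ is an Artin algebra, its category of finitely generated modules satisfies the Krull--Schmidt theorem, so ${}_CM$ decomposes as a finite direct sum of indecomposables. By the hypothesis on $C$, every indecomposable summand is isomorphic to some $V_i$, hence
\[
{}_CM \;\cong\; \bigoplus_{i=1}^n V_i^{m_i}
\]
for nonnegative integers $m_i$.

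Applying the additive functor $B \otimes_C -$ from $C\!-\!\mbox{\rm mod}$ to $B\!-\!\mbox{\rm mod}$ then yields ${}_B(B \otimes_C M) \cong \bigoplus_{i=1}^n (B \otimes_C V_i)^{m_i}$, which is manifestly an object of $\mbox{add}\, \bigoplus_{i=1}^n B \otimes_C V_i$. Combining with the first display gives ${}_BM \in \mbox{add}\, \bigoplus_{i=1}^n B \otimes_C V_i$, and naturality of $\mu_M$ in $M$ ensures that this is functorial, which is the asserted factorization of $\Res^A_B$.

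There is essentially no technical obstacle: the only delicate point is verifying that the hypotheses ${}_BA$ and ${}_CB$ finitely generated are enough to bring a finitely generated $A$-module down to a finitely generated $C$-module so that Krull--Schmidt applies in $C\!-\!\mbox{\rm mod}$. The role of the left relative separable hypothesis is entirely absorbed into the splitting of $\mu_M$ supplied by the preceding lemma.
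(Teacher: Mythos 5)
Your proof is correct and follows essentially the same route as the paper's: split $\mu_M$ via the preceding lemma to realize ${}_BM$ as a direct summand of $B \otimes_C M$, decompose ${}_CM$ by Krull--Schmidt into copies of the $V_i$, and apply $B \otimes_C -$. The only difference is that you spell out the descent of finite generation and the naturality of $\mu_M$ a bit more explicitly than the paper does.
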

\begin{proof}
Given $M \in A\!-\!\mbox{\rm mod}$,
its restrictions ${}_BM$ and ${}_CM$ are finitely generated. By the lemma, ${}_BM$ is isomorphic to a direct summand of $B \otimes_C M$.  Since the restriction 
${}_CM \cong \oplus_{i=1}^n n_i V_i$ for some nonnegative integers $n_i$, one obtains
$\mbox{\rm Res}^A_B M \oplus * \cong  \oplus_{i=1}^n n_i B \otimes_C V_i$, which is expressible as
a Krull-Schmidt decomposition into finitely many indecomposable $B$-module summands of
$B \otimes_C V_1, \ldots, B \otimes_C V_n$. 
\end{proof}
Of course if $A = B$ and $B \supseteq C$ is a separable finitely generated extension, then the theorem recovers Jans's, ``$B$ has finite representation type if $C$ has.''   

\subsection{Triviality of  Relatively H-separable Group Algebra Towers} We next note that towers of finite complex group algebras that are left or right relative H-separable extension are just arbitrary group algebra extensions.  

\begin{prop}
\label{prop-char}
Let $A = \C G \supseteq B = \C H \supseteq C = \C J$ where $G > H > J$ is a tower of subgroups of a finite group $G$.  Then  $A \supseteq B \supseteq C$ is a left or right relative H-separable tower of algebras if and only if $H = J$.
\end{prop}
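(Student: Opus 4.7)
The backward direction is immediate: if $H = J$ then $B = C$ and $B \otimes_C A \cong A$ canonically as $B$-$A$-bimodules, so both the left and right relative H-separability conditions hold trivially. For the forward direction (say the left case) I would exploit semisimplicity: $B \otimes_{\mathbb{C}} A^{op}$ is a semisimple $\mathbb{C}$-algebra, so every $B$-$A$-bimodule is a direct sum of the simples $W \otimes_{\mathbb{C}} V^*$ indexed by pairs $(W,V)$ of irreducible $\mathbb{C}H$- and $\mathbb{C}G$-modules. In this setting the condition ${}_B B \otimes_C A_A \oplus * \cong {}_B A_A^n$ is equivalent to a multiplicity inequality on each simple summand; in particular, any simple $W \otimes V^*$ of multiplicity zero in $A$ must also have multiplicity zero in $B \otimes_C A$.

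The plan is then to compute both multiplicities. From the standard decomposition $\mathbb{C}G \cong \bigoplus_V V \otimes V^*$ of the regular bimodule, restricting the left action to $\mathbb{C}H$ gives the multiplicity of $W \otimes V^*$ in $A$ as $\langle \Res^G_H V, W \rangle_H$. On the other hand, $\mathbb{C}H \otimes_{\mathbb{C}J} \mathbb{C}G \cong \bigoplus_V (\Ind_J^H \Res^G_J V) \otimes V^*$, and Frobenius reciprocity yields the multiplicity of $W \otimes V^*$ in $B \otimes_C A$ as $\langle \Res^G_J V, \Res^H_J W\rangle_J$. The key step is to specialize to $V = \One_G$: then $\Res^G_H V = \One_H$ and $\Res^G_J V = \One_J$, so the support-containment condition forces $\langle \One_J, \Res^H_J W\rangle_J = 0$ for every irreducible $H$-module $W \neq \One_H$. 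A second application of Frobenius reciprocity reinterprets this as saying $\Ind_J^H \One_J$ has no nontrivial irreducible $H$-constituent, hence $\Ind_J^H \One_J = \One_H$, and counting dimensions gives $[H:J] = 1$, i.e., $H = J$.

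For the right relative H-separable case I would reduce to the left case via the canonical anti-automorphism $\mathbb{C}G \to \mathbb{C}G$, $g \mapsto g^{-1}$, which restricts to anti-automorphisms of $\mathbb{C}H$ and $\mathbb{C}J$ and interchanges the two relative H-separability conditions on this tower. I do not expect a serious obstacle: once one accepts the semisimple decomposition of both bimodules, the argument is routine. The one piece of cleverness is the choice $V = \One_G$, which short-circuits what would otherwise become an unwieldy inequality across the whole induction-restriction table of the tower $G > H > J$.
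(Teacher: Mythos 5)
Your proposal is correct and follows essentially the same route as the paper: the paper tensors the bimodule condition with simple left $A$-modules to get the inequality $\bra \Ind^H_J \Res^G_J \psi, \phi \ket_H \leq n \bra \Res^G_H \psi, \phi \ket_H$, then specializes to $\psi = 1_G$ and uses Frobenius reciprocity to conclude $\Ind^H_J 1_J = 1_H$ and hence $|H:J|=1$, which is exactly your multiplicity computation phrased through the decomposition of ${}_B(\under)_A$-bimodules into simples $W \otimes V^*$. Your handling of the right-handed case via $g \mapsto g^{-1}$ is a harmless variant of the paper's appeal to characters of right modules.
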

\begin{proof}
Given ${}_BB \otimes_C A_A \oplus * \cong {}_BA_A^n$, we tensor this with simple left $A$-modules, i.e. $G$-modules, and make use of their (irreducible) characters.  Let $\psi \in \mbox{\rm Irr}(G)$ and $\phi \in \mbox{\rm Irr}(H)$.  From the relative H-separable condition above it follows that
$$\bra \Ind^H_J \Res^G_J \psi, \phi \ket_H \leq n \bra \Res^G_H \psi, \phi \ket_H. $$
Letting $\psi = 1_G$, note that $\Res^G_J 1_G= 1_J$ for instance, so that
$$\bra \Ind^H_J 1_J, \phi \ket_H \leq n \bra 1_H, \phi \ket_H.$$
This last inner product is zero if $\phi \neq 1_H$, so that also $\bra \Ind^H_J 1_J, \phi \ket_H = 0$.  If $\phi = 1_H$, then
$\bra \Ind^H_J 1_J, 1_H \ket_H = \bra 1_J, \Res^H_J 1_H \ket_J = 1$ by Frobenius reciprocity.  
From the orthonormal expansion of $\Ind^H_J 1_J$ in terms of $\mbox{\rm Irr}(H)$, it follows that
$\Ind^H_J 1_J = 1_H$.  Comparing degrees, it follows that $|H : J| = 1$, whence $H = J$. 
The proof using the right relative H-separability condition is a similar use of characters of right modules.
The converse is of course trivial.    
\end{proof}
We have seen in Lemma~\ref{lemma-relsep} that a tower $A \supseteq B \supseteq C$ of arbitrary finite group algebras is always
left or right relative separable if $B \supseteq C$ is a separable extension (iff
$| B : C |$ is invertible in the ground ring).  This follows from the fact that group algebra extensions are  split extensions (since given a subgroup $H < G$, the difference set $G - H$ is closed under multiplication by $H$).  

\section{A Characterization of  normality for progenerator ring extensions}
\label{three}
The next proposition provides an alternative characterization of left relative H-separable condition for a tower $A \supseteq B \supseteq C$ where $B_C$ is finitely generated and projective.
\begin{prop}
\label{prop-ref}
Suppose $A \supseteq B \supseteq C$ is a tower of rings such that the natural module $B_C$ is finite
projective.  Then the left relative H-separable condition~(\ref{eq: rel-H-sep}) is equivalent to the condition, ($\exists n \in \N:$) 
\begin{equation}
\label{eq: equiv}
{}_A\Hom (B_C, A_C)_B \oplus * \cong {}_AA^n_B.
\end{equation}
\end{prop}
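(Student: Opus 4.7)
My plan is to prove both directions by applying contravariant $\Hom$-into-$A$ functors to the two summand relations, using the finite projectivity of $B_C$ as leverage in the nontrivial direction.

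The key preliminary identity, valid for any tower, is the natural $A$-$B$-bimodule isomorphism
$$\Hom_{-A}(B \otimes_C A,\, A) \;\cong\; \Hom(B_C, A_C),$$
obtained by sending $f$ to $b \mapsto f(b \otimes_C 1)$, with inverse $g \mapsto (b \otimes_C a \mapsto g(b)a)$; the left $A$-action comes from the target, and the right $B$-action from the left $B$-action on the source (in both presentations). Combined with the evaluation-at-$1$ isomorphism $\Hom_{-A}({}_BA_A,\, {}_AA_A) \cong {}_AA_B$, applying $\Hom_{-A}(-, A)$ to the split inclusion ${}_BB \otimes_C A_A \oplus * \cong {}_BA^n_A$ yields (\ref{eq: equiv}) immediately. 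So $(\Rightarrow)$ needs no finiteness hypothesis.

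For $(\Leftarrow)$ I would apply the dual functor $\Hom_{A-}(-, A)$ (left-$A$-linear maps into $A$) to (\ref{eq: equiv}). The right-hand side becomes ${}_BA^n_A$ via $f \mapsto f(1)$, so the whole implication reduces to producing a natural $B$-$A$-bimodule isomorphism
$$\Hom_{A-}(\Hom(B_C, A_C),\, A) \;\cong\; B \otimes_C A.$$
Here I invoke finite projectivity of $B_C$, fix a dual basis $\{\beta_i \in B,\, \beta_i^* \in B^\vee\}$ with $B^\vee := \Hom(B_C, C_C)$, and chain three identifications: (i) $\Hom(B_C, A_C) \cong A \otimes_C B^\vee$ as $A$-$B$-bimodules via $a \otimes f \mapsto (b \mapsto af(b))$, an isomorphism precisely because $B_C$ is finite projective; (ii) tensor-Hom adjunction $\Hom_{A-}(A \otimes_C B^\vee,\, A) \cong \Hom_{C-}(B^\vee,\, A)$ as $B$-$A$-bimodules; (iii) the map $B \otimes_C A \to \Hom_{C-}(B^\vee,\, A)$, $b \otimes a \mapsto (f \mapsto f(b)a)$, with inverse $g \mapsto \sum_i \beta_i \otimes g(\beta_i^*)$, verified via the dual-basis identity $\sum_i f(\beta_i)\beta_i^* = f$ in $B^\vee$.

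The main obstacle is tracking the $A$- and $B$-bimodule structures carefully through these successive identifications — in particular, confirming that the right $B$-action on the source of $\Hom(B_C, A_C)$ (which descends from left multiplication by $B$ on $B$) transposes, under $\Hom_{A-}(-, A)$, into the left $B$-action on $B \otimes_C A$, and that target-side $A$-actions always land on the appropriate factor. Once this bookkeeping is nailed down, the round trip $M \mapsto \Hom_{A-}(\Hom_{-A}(M, A),\, A)$ is naturally the identity on $B \otimes_C A$ and on ${}_BA^n_A$, so the two summand conditions are strictly interchanged by the pair of contravariant functors.
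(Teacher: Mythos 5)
Your proof is correct and follows essentially the same route as the paper: apply the right-$A$-dual $\Hom(-_A,A_A)$ to pass from (\ref{eq: rel-H-sep}) to (\ref{eq: equiv}) with no finiteness hypothesis, then use finite projectivity (hence reflexivity) of $B\otimes_C A$ as a right $A$-module to recover (\ref{eq: rel-H-sep}) by applying the left-$A$-dual. The only difference is presentational: where the paper simply cites reflexivity of the finite projective module $B\otimes_C A$, you unwind the isomorphism $\Hom({}_A\Hom(B_C,A_C),{}_AA)\cong B\otimes_C A$ explicitly via dual bases, which is a legitimate (if more laborious) verification of the same fact.
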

\begin{proof}
Apply  $\Hom (-_A,A_A)$, an additive functor from the category of $B$-$A$-bimodules into the category
of $A$-$B$-bimodules, to (\ref{eq: rel-H-sep}), ${}_BB \otimes_C A_A \oplus * \cong {}_BA_A^n$. 
Note that $\Hom (B \otimes_C A_A, A_A) \cong \Hom (B_C, A_C)$ as natural $A$-$B$-bimodules via
$F \mapsto F(- \otimes_C 1)$ with inverse $$f \longmapsto (b \otimes_C a \mapsto f(b)a) $$
for every $f \in \Hom (B_C, A_C)$.  Since ${}_A\Hom (A_A, A_A)_B \cong {}_AA_B$, the condition~(\ref{eq: equiv}) follows without the assumption that $B_C$ is finite projective.

Assuming that $B_C$ is finite projective, it follows that $B \otimes_C A_A$ is finite projective and therefore reflexive.  Then $\Hom ({}_A\Hom(B_C,A_C), {}_AA) \cong B \otimes_C A$ as natural
$B$-$A$-bimodules.  It follows reflexively that condition~(\ref{eq: equiv}) implies condition~(\ref{eq: rel-H-sep}). 
\end{proof}
The next theorem provides many interesting classes of examples of relative H-separable towers of rings.  
\begin{theorem}
\label{th-charD2Hsep}
Suppose $B \supseteq C$ is a ring extension and has the natural module $B_C$ a progenerator.  Let $A := \End B_C$
and $B \into A$ given by the left regular representation $b \mapsto \lambda_b$.  Then $B \supseteq C$ is right normal if and only if the
tower $A \supseteq B \supseteq C$ is left relative H-separable.  
\end{theorem}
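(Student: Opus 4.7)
The plan is to recognize the right depth two condition on $B \supseteq C$ and the left relative H-separable condition on the tower $A \supseteq B \supseteq C$ as corresponding summand relations under the Morita equivalence of bimodule categories ${}_B\M_C \simeq {}_B\M_A$ induced by the progenerator $B_C$ with endomorphism ring $A$.

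First I would set up the dual bimodule $B^* := \Hom(B_C, C_C)$, which is naturally a $C$-$A$-bimodule with right $A$-action $(\phi \cdot a)(x) = \phi(ax)$. Since $B_C$ is finitely generated projective, the evaluation map
\[
B \otimes_C B^* \longrightarrow A = \End B_C, \qquad b \otimes \phi \longmapsto (x \mapsto b\,\phi(x))
\]
is an isomorphism of $A$-bimodules (the left $A$-action matching is where one uses that endomorphisms are $C$-linear); and since $B_C$ is a generator, the trace map $B^* \otimes_A B \xrightarrow{\sim} C$ is an isomorphism of $C$-bimodules.  These are the standard Morita bimodule identities.  Tensoring preserves the left $B$-action, so
\[
- \otimes_C B^* \colon {}_B\M_C \longrightarrow {}_B\M_A \qquad \text{and} \qquad - \otimes_A B \colon {}_B\M_A \longrightarrow {}_B\M_C
\]
are mutually quasi-inverse additive equivalences.

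Next I would trace the relevant bimodules through these functors.  Applying $-\otimes_C B^*$ to ${}_B B_C$ gives ${}_B(B \otimes_C B^*)_A \cong {}_B A_A$, and applying it to ${}_B B \otimes_C B_C$ gives ${}_B(B \otimes_C B \otimes_C B^*)_A \cong {}_B B \otimes_C A_A$.  Conversely, $- \otimes_A B$ sends ${}_B A_A$ to ${}_B B_C$ via $A \otimes_A B \cong B$ and sends ${}_B B \otimes_C A_A$ to ${}_B B \otimes_C B_C$ via $B \otimes_C A \otimes_A B \cong B \otimes_C B$.  Because an equivalence of additive categories preserves finite direct sums and direct summands, the relation
\[
{}_B B \otimes_C B_C \oplus * \cong {}_B B_C^n
\]
corresponds term-for-term with
\[
{}_B B \otimes_C A_A \oplus * \cong {}_B A_A^n,
\]
in either direction, proving the equivalence.

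The only step requiring real verification is the $A$-bimodule isomorphism $B \otimes_C B^* \cong A$, which is the progenerator identity $P \otimes_R P^* \cong \End P_R$ with its bimodule structure; one checks on simple tensors that left and right $A$-actions are respected.  Everything else is formal Morita theory, so I expect no substantial obstacle beyond carefully tracking bimodule structures through the tensor products.
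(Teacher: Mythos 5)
Your proof is correct, but it takes a genuinely different route from the paper's. The paper proves the two directions asymmetrically: for ($\Rightarrow$) it applies the functor ${}_A\Hom(-_C,B_C)$ to the depth-two condition and uses the hom-tensor adjunction to land on the dual condition~(\ref{eq: equiv}) of Proposition~\ref{prop-ref} (whose equivalence with~(\ref{eq: rel-H-sep}) itself rests on reflexivity of the finite projective module $B\otimes_C A$); for ($\Leftarrow$) it substitutes the Morita isomorphisms into~(\ref{eq: equiv}), cancels with $\Hom(B_C,C_C)\otimes_A -$, and dualizes back with $\Hom(-,{}_CC)$. You instead observe that the progenerator $B_C$ induces an equivalence ${}_B\M_C \simeq {}_B\M_A$ under which ${}_BB_C \mapsto {}_BA_A$ and ${}_BB\otimes_C B_C \mapsto {}_BB\otimes_C A_A$, so the two summand conditions correspond term-for-term and both implications drop out at once; this bypasses Proposition~\ref{prop-ref} and the double dualization entirely, and I verified that the bimodule bookkeeping (the left $B$-action on $B\otimes_C B^*\cong A$ agreeing with restriction along $\lambda$) works as you claim. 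What your argument loses relative to the paper's is the observation, made explicitly there, that the forward implication holds under the weaker hypothesis that $B_C$ is merely finitely generated projective --- your equivalence needs the generator property in both directions. What it gains is symmetry and brevity, and it makes transparent why the two conditions are literally the same condition viewed in Morita-equivalent bimodule categories.
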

\begin{proof}
($\Rightarrow$)  This direction of the proof only requires that $B_C$ is finite projective.  
Given the right normality condition, \begin{equation}
\label{eq: RD2}
{}_BB \otimes_C B_C \oplus * \cong {}_BB_C^m
\end{equation}
 for some
$m \in \N$, apply the bimodule ${}_AB_C$ and the additive functor ${}_A\Hom (-_C, B_C)$ to this.  Note 
that the hom-tensor adjoint relation implies that $\Hom (B \otimes_C B_C, B_C) \cong \Hom (B_C, \Hom (B_C, B_C)_C)$ as natural $A$-$B$-bimodules.  This
obtains condition~(\ref{eq: equiv}), equivalent by the proposition to~(\ref{eq: rel-H-sep}). 

($\Leftarrow$) Since we assume $B_C$ is a progenerator, the rings $C$ and $A$ are
Morita equivalent, with  bimodules
${}_AB_C$ and ${}_C\Hom (B_C, C_C)_A$ forming a Morita context. 
In particular, $\Hom (B_C, C_C) \otimes_A B_C \cong C$ as $C$-bimodules
and $B \otimes_C \Hom (B_C, C_C) \cong A$ as $A$-bimodules.

Supposing that condition~(\ref{eq: equiv}) holds on the tower $C \subseteq B \into A$, we substitute ${}_AA_B = {}_A\Hom (B_C, B_C)_B$ in this condition and apply
the hom-tensor adjoint relation with the last Morita isomorphism to obtain:
$$\Hom (B \otimes_C B_C, {}_AB_C)_B \oplus * \cong {}_AB \otimes_C \Hom (B_C, C_C)_B^n. $$
Tensor this from the left by the additive functor ${}_C\Hom (B_C, C_C) \otimes_A -$ and using the (cancellation) isomorphism of Morita pointed out above. \newline We
obtain
${}_C\Hom (B \otimes_C B_C, \Hom (B_C, C_C) \otimes_A B_C)_B \oplus *  \cong $
$$  {}_C\Hom (B \otimes_C B_C, C_C)_B \oplus * \cong  {}_C\Hom (B_C^n, C_C)_B $$
since $B \otimes_C B_C$ is finite projective and one may apply the well-known
natural isomorphism \cite[Proposition 20.10]{AF}.  Now by reflexivity of the projective modules $B_C^n$ and $B \otimes_C B_C$, we apply to this last isomorphism the additive functor $\Hom (- , {}_CC)$ from the category of
$C$-$B$-bimodules into the category of $B$-$C$-bimodules and obtain
the condition~(\ref{eq: RD2}) with $n = m$.  
\end{proof}

\begin{example}
\begin{rm}
Suppose $d(B,A) = 1$, i.e., a ring $A$ is centrally projective over a subring $B$.  Then $A \cong B \otimes_Z C$ where $Z$ is the center of
$B$ and $C = A^B$.  Then $E \cong \End C_Z$ via restriction of endomorphism
to $C$ and $C_Z$ is a progenerator module.  It follows that $\End C_Z$
is an Azumaya $Z$-algebra. The left relative H-separable tower condition ${}_AA \otimes_B E_E \oplus * \cong {}_AE_E^n$
in the theorem reduces to ${}_CC \otimes_Z E_E \oplus * \cong {}_C E_E$, in
which case  Corollary~\ref{cor-interesting} applies to the $Z$-subalgebra pair $C \into \End C_Z$ via $\lambda$.  
\end{rm}
\end{example}

The left and right relative H-separable conditions on a tower $A \supseteq B \supseteq C$ are equivalent if $B \supseteq C$ is a Frobenius extension, i.e., ${}_CB_B \cong {}_C\Hom(B_C, C_C)_B$ as bimodules and $B_C$ has finite projective bases $\{ b_i \}\subset B, \{ \phi_i \} \subset \Hom (B_C,C_C), (i = 1,\ldots,m)$.  

\begin{prop}
If $B \supseteq C$ is a Frobenius extension, then a tower $A \supseteq B \supseteq C$ is left relative H-separable if and only if
it is right relative H-separable.
\end{prop}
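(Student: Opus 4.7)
The plan is to reduce both conditions to the same statement via the equivalent form of relative H-separability supplied by Proposition~\ref{prop-ref}, and then to use the Frobenius isomorphism to identify the two reformulations as the same $A$-$B$-bimodule. Since $B \supseteq C$ is Frobenius, $B_C$ is finitely generated projective, so Proposition~\ref{prop-ref} applies: the left relative H-separable condition on $A \supseteq B \supseteq C$ is equivalent to
\[
{}_A\Hom(B_C, A_C)_B \oplus * \cong {}_AA^n_B.
\]

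Next I would construct a natural $A$-$B$-bimodule isomorphism ${}_A\Hom(B_C, A_C)_B \cong {}_AA \otimes_C B_B$. This is built in two steps. First, since $B_C$ is finitely generated projective, the canonical map
\[
A \otimes_C \Hom(B_C, C_C) \longrightarrow \Hom(B_C, A_C), \qquad a \otimes \psi \longmapsto (b \mapsto a\,\psi(b))
\]
is an $A$-$B$-bimodule isomorphism, where $B$ acts on the right of $\Hom(B_C, C_C)$ by precomposition and on the right of $\Hom(B_C, A_C)$ likewise. Second, the defining Frobenius isomorphism ${}_CB_B \cong {}_C\Hom(B_C, C_C)_B$, tensored from the left over $C$ by $A$, yields an $A$-$B$-bimodule isomorphism ${}_AA \otimes_C B_B \cong {}_AA \otimes_C \Hom(B_C, C_C)_B$. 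Composing these gives the desired identification.

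Substituting this identification into the reformulation of the left relative H-separable condition produces
\[
{}_AA \otimes_C B_B \oplus * \cong {}_AA^n_B,
\]
which is precisely the right relative H-separable condition. All steps are reversible, so the equivalence holds in both directions simultaneously; in particular, the argument is symmetric and requires no separate converse. The only subtlety is bookkeeping the bimodule actions carefully—verifying that the hom-tensor identification and the Frobenius iso respect the left $A$-action and right $B$-action on the nose—but this is routine once the actions are spelled out as above.
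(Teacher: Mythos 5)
Your proof is correct and follows essentially the same route as the paper: both invoke Proposition~\ref{prop-ref} to rewrite left relative H-separability as ${}_A\Hom(B_C,A_C)_B \oplus * \cong {}_AA^n_B$, identify $\Hom(B_C,A_C)$ with $A \otimes_C \Hom(B_C,C_C)$ via the one-point-projection/dual-bases isomorphism, and then apply the Frobenius isomorphism ${}_CB_B \cong {}_C\Hom(B_C,C_C)_B$ to land on the right relative H-separable condition. No gaps.
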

\begin{proof}
We make use of the equivalent condition for left relative H-separable tower in Proposition~\ref{prop-ref}.   We note that $\Hom (B_C, A_C) \cong $    $A \otimes_C \Hom (B_C, C_C)$ via $f \mapsto \sum_i f(b_i)
\otimes_C \phi_i$, with inverse given by the ``one-point projections'' mapping $a \otimes_C \psi \mapsto a\psi(-)$. Observe that this mapping is an $A$-$B$-bimodule isomorphism.  It follows from the Frobenius condition ${}_CB_B \cong {}_B\Hom(B_C, C_C)_B$ that the right relative H-separable condition is satisfied by $A \supseteq B \supseteq C$.  
\end{proof}
As a corollary of this proposition and Theorem~\ref{th-charD2Hsep},  we note that the right normality condition for Frobenius extensions with surjective Frobenius homomorphism is equivalent to left normality condition, another proof in this case of \cite{KS}.

\begin{cor}
\label{cor-leftright}
If $B \supseteq C$ is a Frobenius extension, where $B_C$ is a generator, then $B \supseteq C$ is left normal if and only if it is right normal.
\end{cor}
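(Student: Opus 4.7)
The plan is to combine Theorem~\ref{th-charD2Hsep} with the immediately preceding proposition in a short chain. First, the Frobenius hypothesis already makes $B_C$ finitely generated projective, and the generator hypothesis then upgrades $B_C$ to a progenerator. Setting $A := \End B_C$ and embedding $B \hookrightarrow A$ by $b \mapsto \lambda_b$, Theorem~\ref{th-charD2Hsep} gives that $B \supseteq C$ is right normal if and only if the tower $A \supseteq B \supseteq C$ is left relative H-separable. Since $B \supseteq C$ is Frobenius, the preceding proposition applies to this tower (its hypothesis being only on $B\supseteq C$, with no restriction on the ambient ring) and replaces ``left'' by ``right'': right normality of $B \supseteq C$ is equivalent to right relative H-separability of $A \supseteq B \supseteq C$.

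To close the argument I need the mirror of Theorem~\ref{th-charD2Hsep}: left normality of $B \supseteq C$ is equivalent to right relative H-separability of the \emph{same} tower $A \supseteq B \supseteq C$. For this I would exploit the Frobenius bimodule isomorphism ${}_CB_B \cong {}_C\Hom(B_C,C_C)_B$, which turns ${}_CB$ into a progenerator and gives a canonical identification of $\End({}_CB)$ with $A^{op}$. Running the proof of Theorem~\ref{th-charD2Hsep} with left and right exchanged --- that is, applying the Hom--tensor reformulation of Proposition~\ref{prop-ref} to the right relative H-separable condition ${}_AA \otimes_C B_B \oplus * \cong {}_AA_B^n$, and then using the Frobenius iso to translate the resulting Hom-spaces back into tensor expressions ${}_C B \otimes_C B_B$ --- should yield exactly the left depth-two condition ${}_CB \otimes_C B_B \oplus * \cong {}_CB_B^m$, i.e.\ left normality. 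Chaining the three equivalences gives right normal $\Leftrightarrow$ left normal.

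The main obstacle is matching the ambient endomorphism ring of the left-normality characterization with the ring $A = \End B_C$ used in the right-normality characterization: applied directly, the mirror of Theorem~\ref{th-charD2Hsep} naturally lives over $\End({}_CB)$, not over $A$. The Frobenius isomorphism ${}_CB_B \cong {}_C\Hom(B_C,C_C)_B$ is exactly the tool that lets these two Morita pictures be identified; once this identification is spelled out, the remaining bookkeeping is just the Hom--tensor and reflexivity manipulations already carried out in the proofs of Theorem~\ref{th-charD2Hsep} and Proposition~\ref{prop-ref}.
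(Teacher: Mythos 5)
Your proposal is correct and follows essentially the same route as the paper, which obtains the corollary precisely by chaining Theorem~\ref{th-charD2Hsep}, the preceding left/right switch proposition for Frobenius extensions, and the left--right mirror of Theorem~\ref{th-charD2Hsep}. The only difference is that you make explicit the need to identify $\End (B_C)$ with $\End ({}_CB)$ over $B$ via the Frobenius isomorphism ${}_CB_B \cong {}_C\Hom(B_C,C_C)_B$, a bookkeeping point the paper leaves implicit.
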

\subsection{A characterization of normality  for Frobenius extensions}
\label{Fro}
In this subsection we characterize normal (twisted) Frobenius extensions
together with their endomorphism rings as being relative H-separable towers.
We find it convenient to change notation to $A \supseteq B$ being the Frobenius
ring extension and $E := \End A_B$ being the top ring in the tower
$B \subseteq A \into E$ where $A \into E$ is given by $a \mapsto \lambda_a$
and $\lambda_a(x) = ax$.  

Suppose $\beta: B \rightarrow B$ is a ring automorphism of $B$.  Denote
a $B$-module $M_B$ as $M_{\beta}$ if twisted by $\beta$ as follows:
$m \cdot b = m \beta(b)$.  
Recall that a $\beta$-Frobenius (ring) extension $A \supseteq B$ is characterized by having a (Frobenius)
homomorphism  $F: {}_BA_B \rightarrow {}_{\beta}B_B$ satisfying $F(b_1ab_2) = 
\beta(b_1)F(a)b_2$ for each $b_1,b_2 \in B$, $a \in A$.  Dual bases
$\{ x_i \}, \{ y_i \}$ in $A$ satisfy $\sum_{i=1}^n x_iF(y_ia) = a$ and $\sum_{i=1}^n
\beta^{-1}(F(ax_i))y_i = a$ for each $a \in A$.  Equivalently, $A_B$ is finite projective and $A \cong {}_{\beta}\Hom (A_B, B_B)$ as  $B$-$A$-bimodules:  see \cite{NEFE} for more details and references.  

For example, if $\beta$ is an inner automorphism, then $A \supseteq B$ is an (ordinary) Frobenius extensions, such
as a group algebra extension of a group $G$ and subgroup $H$
of finite index $n$.  (Suppose  $g_1,\ldots,g_n$ are the right coset representatives of $H$ in $G$, $K$ an arbitrary commutative
ring, then the group algebra $A = KG$ is a Frobenius extension of the group subalgebra $B = KH$
with $F: A \rightarrow B$ the obvious projection defined by $F(\sum_{g \in G} a_g g) = \sum_{h \in H} a_h h$ and dual bases $x_i = g_i^{-1}, y_i = g_i$.) 

\begin{cor}
\label{cor-Frob}
Suppose $A \supseteq B$ is a $\beta$-Frobenius extension with surjective Frobenius homomorphism $F: A \rightarrow B$.  Let $E : = \End A_B$ and embed $A \into E$ via the left regular representation
$\lambda_a(x) = ax$.  Then the tower of rings $B \subseteq A \into E$ is left relative H-separable if and only if $B \subseteq A$ is  right normal.
\end{cor}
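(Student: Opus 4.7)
The plan is to reduce the corollary to Theorem~\ref{th-charD2Hsep} by a relabelling of the tower. That theorem starts from a ring extension whose subring-side natural module is a progenerator, and equates right normality of the extension with the left relative H-separable tower condition on the extension together with its right endomorphism ring on top. I would rename so that the corollary's $A \supseteq B$ plays the role of the theorem's ``$B \supseteq C$,'' so that $E \supseteq A \supseteq B$ plays the role of the theorem's ``$\End B_C \supseteq B \supseteq C$,'' and so that the embedding $a \mapsto \lambda_a$ matches the theorem's left regular representation. Everything then reduces to one verification: that $A_B$ is a progenerator.

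To carry out that verification, I would first note that finite projectivity of $A_B$ is immediate from the dual bases $\{x_i\}, \{y_i\}$ built into the $\beta$-Frobenius data via $a = \sum_i x_i F(y_i a)$. For the generator property, I would use the defining bimodule isomorphism ${}_BA_A \cong {}_{\beta}\Hom (A_B, B_B)_A$ of a $\beta$-Frobenius extension, under which an element $c \in A$ on the left corresponds to the right $B$-linear map $\phi_c : a \mapsto F(ca)$ on the right. Consequently every right $B$-linear map $A \to B$ has the form $\phi_c$ for some $c$, and the trace ideal of $A_B$ in $B$ is $\{\phi_c(a) : c, a \in A\} = F(A)$, which equals $B$ precisely because $F$ is assumed surjective. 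Thus $A_B$ is a generator, completing the progenerator check.

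With $A_B$ a progenerator and $E = \End A_B$, the relabelled Theorem~\ref{th-charD2Hsep} applies verbatim and delivers the stated equivalence between right normality of $B \subseteq A$ and left relative H-separability of the tower $B \subseteq A \hookrightarrow E$. I do not anticipate a substantial obstacle: the only place where genuine work occurs is in identifying the trace ideal with $F(A)$ through the $\beta$-Frobenius isomorphism, and after this identification the corollary is a direct instance of the theorem.
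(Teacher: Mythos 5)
Your proposal is correct and follows the paper's own route: both reduce the corollary to Theorem~\ref{th-charD2Hsep} by verifying that $A_B$ is a progenerator, with finite projectivity coming from the Frobenius dual bases and the generator property from surjectivity of $F$. The only difference is that you spell out the trace-ideal computation $\sum_{f}f(A)=F(A)=B$ that the paper leaves implicit.
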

\begin{proof}
Since $F: A \rightarrow B$ is assumed surjective, it follows that $A_B$ (and ${}_BA$ by using equivalently $\beta^{-1} \circ F$) is a  generator.  It also follows from the hypothesis of Frobenius extension
that $A_B$ (and ${}_BA$) are finite projective.  Apply Theorem~\ref{th-charD2Hsep}
to conclude that the left relative H-separable tower condition on $B \subseteq A \into E$ is equivalent
to the right normality condition on $B \subseteq A$.
\end{proof}

Recall that a Hopf subalgebra $R$ is normal in a Hopf algebra $H$ if $R$ is stable
under the left and right adjoint actions of $H$ on $R$.  For group algebra extensions this specializes to the usual notion of normal subgroup.  
\begin{cor}
\label{cor-Hopfish}
A Hopf subalgebra
$R$ of a finite-dimensional Hopf algebra $H$ is normal if and only if the tower of algebras 
$R \subseteq H \into \End H_R$ is left relative H-separable.  
\end{cor}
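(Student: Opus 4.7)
The plan is to deduce this from Corollary~\ref{cor-Frob} together with the Hopf-algebraic characterization of depth~$2$ in \cite{BK2}. First I would verify the hypotheses of Corollary~\ref{cor-Frob} in the setting $A = H$, $B = R$: namely, that $H \supseteq R$ is a $\beta$-Frobenius extension with surjective Frobenius homomorphism. This is a classical fact (and is already pointed out in the introduction, where an arbitrary Hopf subalgebra of a finite-dimensional Hopf algebra is noted to be a twisted Frobenius extension): by Fischman--Montgomery--Schneider, $H \supseteq R$ is $\beta$-Frobenius with $\beta$ built from the modular functions of $H$ and $R$, and by the Nichols--Zoeller freeness theorem $H_R$ is a free (in particular, generator) module, which forces the Frobenius homomorphism $F: H \to R$ to be surjective.

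With these inputs in place, Corollary~\ref{cor-Frob} applied to $A = H$, $B = R$ and $E = \End H_R$ yields that the tower $R \subseteq H \into E$ is left relative H-separable if and only if $R \subseteq H$ is right normal in the sense of Definition~\ref{def-normal}. It then remains to identify right normality of the subring pair $R \subseteq H$ with normality of $R$ as a Hopf subalgebra. This is precisely the content of \cite{BK2}: for a Hopf subalgebra of a finite-dimensional Hopf algebra, the right (equivalently, by Corollary~\ref{cor-leftright} or by \cite{BK2} directly, left) depth~$2$ condition is equivalent to invariance of $R$ under both the left and right adjoint actions of $H$, i.e., to normality of $R$ in the Hopf-theoretic sense recalled just before the corollary.

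The main obstacle is essentially bookkeeping rather than mathematics: the substantive work has been carried out in Theorem~\ref{th-charD2Hsep}, Corollary~\ref{cor-Frob}, and in \cite{BK2}, so the proof reduces to a citation-level verification that the $\beta$-Frobenius hypothesis with surjective $F$ holds and that right normality translates to the usual Hopf-theoretic normality. The one non-trivial external input is the $\beta$-Frobenius structure on $H \supseteq R$ together with freeness of $H_R$, but both are standard in finite-dimensional Hopf algebra theory and require no further calculation here.
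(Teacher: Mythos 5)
Your proof is correct and follows essentially the same route as the paper: invoke the $\beta$-Frobenius structure of $H \supseteq R$ (the paper cites Oberst--Schneider where you cite Fischman--Montgomery--Schneider) together with Nichols--Zoeller freeness to satisfy the hypotheses of Corollary~\ref{cor-Frob}, then translate right depth~$2$ into Hopf-theoretic normality via \cite{BK2}. The only addition you make is to note explicitly that freeness of $H_R$ forces surjectivity of the Frobenius homomorphism, which is a correct and welcome clarification of a step the paper leaves implicit.
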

\begin{proof}
This follows from Corollary~\ref{cor-Frob} and  theorems  that $H$ is a $\beta$-Frobenius extension of $R$ (Oberst-Schneider), and another that $H_R$ is free (Nichols-Zoeller).  
Also, as remarked in the introduction, the equivalence of the normality condition for a  Hopf subalgebra $R \subseteq H$ with the depth
$2$ condition on  the ring extension $R \subseteq H$ follows from  \cite{BK2}.  
\end{proof}
\subsection{Galois correspondence proposal} Again let $E$ denote $\End A_B$. 
The condition on the tower $B \subseteq A \into E$ in the next corollary is called the rD3 condition in \cite{LK2008b}.  The depth three condition on $A \supseteq B$ is that ${}_BA \otimes_B A_B \oplus * \cong {}_BA^m_B$ for some $m \in \N$.    Below we apply the same Frobenius coordinate system as above, but we may assume that the twist automorphism $\beta = \id_B$.  
\begin{cor}
Suppose $A \supseteq B$ is a Frobenius extension with surjective Frobenius homomorphism.  Then
$A \supseteq B$ has depth $3$ if and only if  ${}_EE \otimes_A E_B \oplus * \cong {}_EE^m_B$
for some $m \in \N$. 
\end{cor}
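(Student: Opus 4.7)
The plan is to combine two classical facts: the Morita equivalence of $B$ and $E = \End A_B$ (available because $A_B$ is a progenerator under our hypotheses, as used in Corollary~\ref{cor-Frob}) together with the Frobenius ring-theoretic isomorphism $\Phi : A \otimes_B A \stackrel{\cong}{\longrightarrow} E$ given by $\Phi(a \otimes a')(x) = a F(a' x)$. The core idea is to transport the depth $3$ relation, which is among $B$-$B$-bimodules built from $A^{\otimes_B 2}$ and $A$, into the analogous relation among $E$-$B$-bimodules built from $A^{\otimes_B 3}$ and $A^{\otimes_B 2}$, and then to rewrite the latter via $\Phi$.

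First I would observe that the Morita functor $A \otimes_B -$ restricts to an equivalence from $B$-$B$-bimodules to $E$-$B$-bimodules, hence preserves direct summand relations: for $B$-$B$-bimodules $M,N$, one has $M \oplus * \cong N^m$ if and only if $A \otimes_B M \oplus * \cong (A \otimes_B N)^m$ as $E$-$B$-bimodules. Applied to the depth $3$ condition ${}_BA \otimes_B A_B \oplus * \cong {}_BA_B^m$, this produces the equivalent condition
\begin{equation*}
{}_E A^{\otimes_B 3}{}_B \oplus * \cong {}_E (A^{\otimes_B 2})_B^m.
\end{equation*}

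Next I would use $\Phi$ to identify this with the tower condition on $E$. A direct check shows that $\Phi$ is an isomorphism of $E$-$A$-bimodules in which the left $E$-action on $A \otimes_B A$ is the Morita action $e \cdot (a \otimes a') = e(a) \otimes a'$ and the right $A$-action is on the second tensorand; in particular $\Phi$ is an $E$-$B$-bimodule isomorphism ${}_E (A^{\otimes_B 2})_B \cong {}_E E_B$. Tensoring $\Phi$ over $A$ with itself and using the cancellation $A \otimes_A A \cong A$ gives an $E$-$B$-bimodule isomorphism ${}_E E \otimes_A E_B \cong {}_E A^{\otimes_B 3}{}_B$. Substituting these two identifications into the displayed condition above yields exactly ${}_E E \otimes_A E_B \oplus * \cong {}_E E^m_B$, and the converse direction follows by applying the inverse Morita functor $\Hom(A_B, B_B) \otimes_E -$.

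The main obstacle is purely bookkeeping of bimodule structures: one must verify that under $\Phi$ the left $E$-action on $A \otimes_B A$ arising from the Morita construction agrees with the left regular action on $E$, and that the identification $E \otimes_A E \cong A^{\otimes_B 3}$ places the left $E$-action on the first tensorand and the right $B$-action on the last tensorand. Both compatibilities follow quickly from the defining formula $\Phi(a \otimes a')(x) = aF(a'x)$ together with right $B$-linearity of elements of $E$, and once checked the rest of the proof is a straightforward transfer across the Morita equivalence.
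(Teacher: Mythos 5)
Your argument is correct and matches the paper's intended proof: the paper likewise reduces the corollary to the $E$-$A$-bimodule isomorphism $E \cong A \otimes_B A$, $f \mapsto \sum_i f(x_i) \otimes y_i$ (your $\Phi^{-1}$), combined with the Morita equivalence of $B$ and $E$ coming from the progenerator $A_B$, leaving the transfer of the depth-$3$ condition to the reader exactly as you carry it out. Your use of the covariant equivalence $A \otimes_B -$ on bimodule categories in both directions is a slightly streamlined version of the hom-dual/reflexivity bookkeeping in the proof of Theorem~\ref{th-charD2Hsep} that the paper points to, but it is the same approach in substance.
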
 
\begin{proof}
The proof is similar to the proof of Theorem~\ref{th-charD2Hsep}, but using the  $E$-$A$-bimodule isomorphism $E \stackrel{\cong}{\longrightarrow} A \otimes_B A$
given by $f \mapsto \sum_i f(x_i) \otimes y_i$, with inverse mapping given by $a \otimes_B a' \mapsto
\lambda_a \circ F \circ \lambda_{a'}$.   The rest of the proof is left to the reader. 
\end{proof}

The two conditions of ``depth three'' and ``depth two'' on a tower go up and down as follows.  The short proof is left to the reader as an exercise using Proposition~\ref{folklore?}.  
\begin{prop}
Suppose $A \supseteq B \supseteq C \supseteq D$ is a tower of unital subrings.
If $A \supseteq B$ has depth $1$ and $A \supseteq B \supseteq C$ is right relative H-separable, then $B \supseteq C \supseteq D$ satisfies the rD3 condition,  ${}_BB \otimes_C B_D \oplus *$   $ \cong {}_BB_D^n$ for some $n \in \N$.  If $B \supseteq D$ has H-depth $1$ and $B \supseteq C \supseteq D$ satisfies he rD3 condition, then $A \supseteq B \supseteq C$ is right relative H-separable. 
\end{prop}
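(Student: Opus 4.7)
The plan is to factor both implications through the intermediate statement that $B \supseteq C$ is H-separable, i.e.\ ${}_B B \otimes_C B_B \oplus * \cong {}_B B_B^N$ as $B$-$B$-bimodules. Once this intermediate is established, the rD3 condition is just its restriction from $B$-$B$- to $B$-$D$-bimodules, and right relative H-separability of $A \supseteq B \supseteq C$ is exactly the content of Lemma~\ref{lemma-hsep} applied to $B \supseteq C$ inside $A$.

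For the forward direction, I would first restrict the right relative H-separability condition ${}_A A \otimes_C B_B \oplus * \cong {}_A A_B^n$ to $B$-$B$-bimodules and substitute the depth~$1$ hypothesis ${}_B A_B \oplus * \cong {}_B B_B^k$ to obtain ${}_B A \otimes_C B_B \oplus * \cong {}_B B_B^{kn}$. Then I would invoke the H-equivalence formulation of depth~$1$ recorded as equivalent to the one-sided summand condition right after Definition~\ref{def-depth} (so ${}_B B_B \oplus * \cong {}_B A_B^l$ for some $l$), and tensor with $-\otimes_C B$ on the right to get ${}_B B \otimes_C B_B \oplus * \cong ({}_B A \otimes_C B_B)^l$. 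Chaining the two displayed relations yields ${}_B B \otimes_C B_B \oplus * \cong {}_B B_B^{knl}$, i.e.\ $B \supseteq C$ is H-separable; restricting to $B$-$D$-bimodules gives the rD3 condition.

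For the backward direction, H-depth~$1$ of $B \supseteq D$ is H-separability of $B \supseteq D$, so I would apply the consequence of Proposition~\ref{folklore?} to the $B$-$B$-bimodules $V := B \otimes_C B$ and $W := B$: the rD3 hypothesis reads $V_D \oplus * \cong W_D^n$ as right $D$-modules, and the proposition produces $V_B \oplus * \cong W_B^{nm}$ as right $B$-modules. Crucially, the natural transformations $\sigma_M, \tau_M$ constructed in the proof of Proposition~\ref{folklore?} are given by multiplication by fixed elements of $(B \otimes_D B)^B$ and $B^D$, so they commute with the left $B$-action on $V$ and $W$ (which acts by right-$B$-module endomorphisms); hence the splitting is in fact a $B$-$B$-bimodule splitting. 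This gives H-separability of $B \supseteq C$, and Lemma~\ref{lemma-hsep} then delivers right relative H-separability of $A \supseteq B \supseteq C$. The main obstacle is precisely this bimodule-promotion step: one must verify from the explicit folklore formulas that left $B$-multiplication is preserved, promoting a one-sided summand decomposition to a bimodule one needed to conclude H-separability of $B \supseteq C$.
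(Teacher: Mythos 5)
The paper does not actually supply a proof of this proposition --- it is explicitly ``left to the reader as an exercise using Proposition~\ref{folklore?}'' --- so there is no argument of the author's to compare against; your proof is correct and is consistent with that hint. Routing both implications through the stronger intermediate claim that $B \supseteq C$ is H-separable is legitimate: in the forward direction the chain ${}_BB \otimes_C B_B \oplus * \cong {}_B(A \otimes_C B)^l_B \oplus * \cong {}_BA^{nl}_B \oplus * \cong {}_BB^{knl}_B$ uses only restriction of scalars, tensoring, and the two halves of the H-equivalence defining depth $1$ in Definition~\ref{def-depth}, all of which preserve direct summands; and Lemma~\ref{lemma-hsep} correctly converts H-separability of $B \supseteq C$ into (left and) right relative H-separability of the tower in the converse direction. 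The one place to tighten the write-up is the bimodule-promotion step in the backward direction. The left $B$-linearity of the eventual splitting of ${}_BB \otimes_C B_B$ off ${}_BB_B^{nm}$ is not secured solely by the explicit Casimir-element form of $\sigma_M$, $\tau_M$ and of the separability splitting $v \mapsto v e^1 \otimes_D e^2$: the middle link $V \otimes_D B \oplus * \cong W^n \otimes_D B$ is obtained by tensoring the rD3 splitting with $- \otimes_D B$, so you must invoke the rD3 hypothesis in its full $B$-$D$-bimodule form ${}_BB \otimes_C B_D \oplus * \cong {}_BB_D^n$, rather than only its right $D$-module restriction as you momentarily phrase it. Since the hypothesis is stated as a bimodule condition this costs nothing; moreover, because $W = B$, the last link is just the H-separability bimodule condition for $B \supseteq D$ itself, so no naturality argument is even needed there. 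With that small adjustment the proof is complete.
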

In \cite{S} the left relative separable and H-separable conditions on towers of rings are used by
Sugano for Galois correspondence  in an H-separable extension in terms of centralizers. 
A final thought is to ask if results for Galois correspondence of a normal extension  in \cite{LK2008b}
(in terms of endomorphism rings) may be improved with the use of the tower condition  studied in this paper.  
\subsection{Acknowledgements}
\small{Research for this paper was funded by the European Regional Development Fund through the programme {\tiny COMPETE} 
and by the Portuguese Government through the FCT  under the project 
\tiny{ PE-C/MAT/UI0144/2011.nts}}.


\begin{thebibliography}{XXXXXX}
\bibitem{AF} Anderson and K.~Fuller,  \textit{Rings and Categories of Modules}, G.T.M.\ 13, Springer, 1992. 
\bibitem{BK2} R.~Boltje and B.~K\"ulshammer,
On the depth 2 condition for group algebra and Hopf algebra extensions, 
\textit{J.\ Algebra} \textbf{323} (2010), 1783-1796.
\bibitem{BK}{R.~Boltje and B.~K\"ulshammer,
Group algebra extensions of depth one, \textit{Algebra Number Theory} \textbf{5}
(2011), 63-73.} 
\bibitem{BDK} R.~Boltje, S.~Danz and B.~K\"ulshammer, On the depth of subgroups and group algebra extensions, \textit{J.\ Algebra} \textbf{335} (2011), 258--281.  
\bibitem{BuK} S.~Burciu and L.~Kadison, Subgroups of depth three,  \textit{Surv.\ Diff.\ Geom.} \textbf{XV} (2011), 17--36. 
\bibitem{BKK} S.~Burciu, L.~Kadison and B.~K\"ulshammer, On subgroup depth, \textit{I.E.J.A.} \textbf{9} (2011), 133--166. 
\bibitem{D}S.~Danz, The depth of some twisted group extensions, \textit{Comm.\ Alg.} \textbf{39} (2011),   1--15.
\bibitem{F}T.~Fritzsche, The depth of subgroups of $\mathrm{PSL}(2,q)$, \textit{J.\ Algebra} \textbf{349} (2011), 217--233.
\bibitem{FKR}T.~Fritzsche, B.~K\"ulshammer and C.~Reiche, The depth of Young subgroups of symmetric groups, \textit{J.\ Algebra} \textbf{381} (2013),  96--109. 
\bibitem{H}D.G.~Higman, Indecomposable representations at characteristic p, \textit{Duke Math.\ J.}
\textbf{7} (1954), 377--381.
\bibitem{Hir}K.~Hirata, Some types of separable extensions of rings, \textit{Nagoya Math.\ J.} \textbf{33} (1968), 107--115. 
\bibitem{HS}K.~Hirata and K.~Sugano, On semisimple and separable extensions of noncommutative rings, \textit{J.\ Math.\ Soc.\ Japan}\textbf{18}  (1966), 360--373.
\bibitem{J}J.-P.~Jans, The representation type of algebras and subalgebras, \textit{Can.\ J.\ Math.}
\textbf{10} (1957), 39--44.
\bibitem{NEFE} L.~Kadison, 
\textit{New examples of Frobenius extensions}, 
University Lecture Series \textbf{14},
Amer.\ Math.\ Soc., Providence,  1999. 
\bibitem{KN}L.~Kadison and D.~Nikshych, Hopf algebra actions on strongly separable extensions of depth two, \textit{Adv.\ Math.} \textbf{163} (2001), 312--342.
\bibitem{KS}L.~Kadison and K.~Szlach\'anyi, Bialgebroid actions on depth two extensions and duality, \textit{Adv.\ Math.} \textbf{179} (2003), 75--121.
\bibitem{LK2008a}L.~Kadison, Anchor maps and stable modules in depth two, \textit{Appl.\ Cat.\ Struct.}
\textbf{16} (2008), 141--157. 
\bibitem{LK2008}L.~Kadison, Infinite index subalgebras of depth two, \textit{Proc.\ A.M.S.} \textbf{136} (2008), 1523--1532. 
\bibitem{LK2008b}L.~Kadison, Finite depth and Jacobson-Bourbaki correspondence, \textit{J.\ Pure \& Appl.\ Alg.} \textbf{212} (2008), 1822--1839. 
\bibitem{LK2011}L.~Kadison, Odd H-depth and H-separable extensions, \textit{Cen.\ Eur.\ J.\ Math.}
\textbf{10} (2012), 958--968.
\bibitem{LK2012}L.~Kadison, Subring depth, Frobenius extensions and towers, \textit{Int.\ J.\ Math.\ \& Math.\ Sci.} \textbf{2012}, article 254791. 
\bibitem{LK2013}L.~Kadison, Hopf subalgebras and tensor powers of generalized permutation modules, \textit{J.\ Pure \& Appl.\ Alg.} \textbf{218} (2014), 367--380.
\bibitem{M}B.~M\"uller, Quasi-Frobenius-Erweiterungen, \textit{Math. Z.} \textbf{85} (1964), 345--368.
\bibitem{P}R.~Pierce, \textit{Associative Algebras}, G.T.M.\ \textbf{88}, Springer, 1982.  
\bibitem{S}K.~Sugano, On centralizers in separable extensions,  \textit{Osaka J.\ Math.} \textbf{7} (1970), 29--40. 

\end{thebibliography}
\end{document}